\theoremstyle{plain}
\newtheorem{theorem}{Theorem}[section]
\newtheorem{lemma}[theorem]{Lemma}
\newtheorem{proposition}[theorem]{Proposition}
\theoremstyle{definition}
\newtheorem{remark}[theorem]{Remark}
\newcommand{\Z}{\mathbb{Z}}
\newcommand{\R}{\mathbb{R}}
\newcommand{\C}{\mathbb{C}}
\DeclareMathOperator{\conv}{\mathrm{conv}}
\begin{document}

\title[Roots of real-valued zero mean maps]{Roots of real-valued zero mean maps:\\ Compositions of linear functionals and equivariant maps}


\author{Francesca Cantor \and Julia D'Amico \and Florian Frick \and Eric Myzelev}

\address[FC]{Dept. Math. Stat. Comput. Science, University of Illinois at Chicago, Chicago, IL 60607, USA}
\email{fcant@uic.edu}

\address[JD]{Dept. Math., University of Pennsylvania, Philadelphia, PA 19104, USA}
\email{damico7@sas.upenn.edu}

\address[FF]{Dept. Math. Sciences, Carnegie Mellon University, Pittsburgh, PA 15213, USA}
\email{frick@cmu.edu}

\address[EM]{Dept. Math. Sciences, Carnegie Mellon University, Pittsburgh, PA 15213, USA}
\email{etmyzele@andrew.cmu.edu}

\date{\today}
\maketitle


\begin{abstract}
\small 
We develop a novel topological framework that yields results constraining the distribution of zeros of certain zero mean real-valued maps, namely those obtained from composing a fixed equivariant map with linear functionals. We use this framework to establish upper bounds for the topology of set systems in the domain where (multivariate) trigonometric polynomials do not change their sign, generalizing and, in certain regimes, strengthening results in the literature. Our results more generally contain restrictions on the distribution of zeros of Chebyshev spaces as special cases. Lastly, we apply this framework to derive existence results for efficient cubature rules for compositions of affine functionals and equivariant maps.
\end{abstract}

\section{Introduction}

For a finite set~${S\subset \Z_{> 0}}$, a \emph{trigonometric polynomial with spectrum $S$} is a function of the form 
\[
    f(t)=\sum_{k\in S}(a_k\sin(2\pi kt)+b_k\cos(2\pi kt)),
\]
where for each $k\in S$, $a_k$ and $b_k$ are real numbers that are not both zero. The degree of $f$ is defined as the maximum of~$S$.
Since every such function is periodic, we may consider them as functions from the circle $S^1 = [0,1] / (0 \sim 1)$ of circumference one to~$\R$. Moreover, any such function has \emph{zero mean}, that is, the integral over its domain vanishes. 
Babenko~\cite{babenko1984} showed that any trigonometric polynomial of degree at most~$d$ has a zero on any closed interval of length~$\frac{d}{d+1}$. Our goal here is to develop a topological framework for such ``distribution of zeros'' problems and extend Babenko's result in various ways to a more general class of zero mean maps.

Crucial to Babenko's approach is that the vector space~$T_d$ spanned by the functions 
\[
    \sin(2\pi\cdot kt), 
    \cos(2\pi\cdot kt), \ \text{for} \ k \in \{1,2,\dots, d\}, \ \text{and constant functions}
\]
    forms a \emph{Chebyshev space}, that is, a $(2d+1)$-dimensional vector space of continuous functions $S^1 \to \R$ containing all constant functions such that for all non-constant $f\in V$ and $b \in \R$ the preimage $f^{-1}(b)$ has size at most~$d$. In fact, Babenko proves more general results for Chebyshev spaces, which are important objects of study in approximation theory; see for instance~\cite{schumaker2007}. A curve $\gamma$ in $\R^d$ is called \emph{convex} if every affine hyperplane intersects its image in at most $d$ points. It is easily seen that $T_d$ is a Chebyshev space is equivalent to the convexity of the \emph{trigonometric moment curve} 
\[
    \gamma_d(t) = (\sin 2\pi t, \cos 2\pi t, \sin 2\pi\cdot 2t, \cos 2\pi\cdot 2t, \dots, \sin 2\pi\cdot dt, \cos 2\pi\cdot dt).
\]

The trigonometric moment curve $\gamma_d$ is also a prototypical example of a symmetric curve in addition to being convex. Indeed, it is an orbit of a circle action on Euclidean space that is free away from the origin. Such symmetry is strictly more general than the convexity of the curve (see Lemma~\ref{lem:Zp}), and here we show that Babenko's result extends to the symmetric setting; see Theorem~\ref{thm:equivariant}. In Theorem~\ref{thm:multivariate} we record results for multivariate trigonometric polynomials, which once again extends to the symmetric setting; see Theorem~\ref{thm:Zp-torus}. Similarly, we extend a result of Adams, Bush, and the third author on sign changes of raked trigonometric polynomials on small diameter sets to this symmetric setting; see Theorem~\ref{thm:preimage-vr}. Sign changes of systems of functions are closely related to efficient numerical integration; we state a result about the existence of cubature rules for multivariate trigonometric polynomials and more general functions derived from the symmetric case; see Theorem~\ref{thm:int-eq}. We now carefully state these results.

Trigonometric polynomials of degree~$d$ are compositions of linear functionals with~$\gamma_d$. We generalize results to the setting of compositions of linear functionals with equivariant (i.e., symmetry-preserving) maps. For a group $G$ acting on metric spaces $X$ and~$Y$, a map $f\colon X \to Y$ is \emph{$G$-equivariant} if it commutes with the $G$-actions, that is, if $f(g\cdot x) = g\cdot f(x)$ for all $x\in X$ and for all $g \in G$. The $p$-element cyclic group~$\Z/p$ acts on the circle~$S^1$ by rotations. A point $x\in X$ is a \emph{fixed point} of a $G$-action on~$X$ if $g\cdot x = x$ for all $g\in G$. For $G$-actions on~$\R^d$, we do not necessarily need that $G$ acts by linear maps, only that the action preserves rays through the origin.

\begin{theorem}
\label{thm:equivariant}
    Let $\gamma\colon S^1 \to \R^{2d}$ be a $\Z/p$-equivariant map for some prime $p \ge 2$ and some $\Z/p$-action on~$\R^{2d}$ whose only fixed point is~$0$. Then there is a closed interval $I \subset S^1$ of length~$\frac{d}{d+1}$ such that all compositions of $\gamma$ with linear functionals, that is, maps $f_a \colon S^1 \to \R$ given by $f_a(x) = \langle a, \gamma(x) \rangle$ for $a \in \R^{2d}$, have a zero on~$I$. In fact, the origin is a convex combination of at most $d+1$ points in the image of~$\gamma$.
\end{theorem}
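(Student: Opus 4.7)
The plan is to reduce the theorem, via Lemma~\ref{lem:hahn-banach} and a pigeonhole argument, to the Carath\'eodory-type claim that $0$ is a convex combination of at most $d+1$ points along $\gamma$, and then to establish this claim via a Borsuk--Ulam-type theorem for $\Z/p$-actions. For the reduction, any $d+1$ distinct points on $S^1$ decompose the circle into $d+1$ open arcs of total length $1$, so by pigeonhole one arc has length at least $1/(d+1)$ and its complement is a closed arc of length at most $d/(d+1)$ containing all the points. Combined with Lemma~\ref{lem:hahn-banach}, it therefore suffices to produce $x_0,\dots,x_d \in S^1$ and nonnegative weights $\lambda_0,\dots,\lambda_d$ summing to $1$ with $\sum_i \lambda_i \gamma(x_i) = 0$.

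To this end, I consider $X = (S^1)^{*(d+1)} \cong S^{2d+1}$ with the diagonal $\Z/p$-action induced from rotation on each factor of $S^1$; this action is free since rotations act freely on $S^1$ and every point in $X$ assigns positive weight to at least one factor. Define the convex-combination map
\[
  \Phi \colon X \to \R^{2d}, \qquad \Phi\!\left( \sum_{i=0}^{d} \lambda_i x_i \right) \;=\; \sum_{i=0}^{d} \lambda_i \gamma(x_i),
\]
which is $\Z/p$-equivariant by equivariance of $\gamma$ when the action on $\R^{2d}$ is linear. The desired convex combination exists iff $\Phi$ hits $0$. Suppose for contradiction that $\Phi(X) \subset \R^{2d}\setminus\{0\}$. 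Since $p$ is prime, every non-identity $g \in \Z/p$ generates the whole group, so $g$ has fixed set $\{0\}$ in $\R^{2d}$; ray preservation combined with the fact that any self-homeomorphism of $[0,\infty)$ of order dividing $p$ fixing $0$ must be the identity further forces $g$ to send no ray to itself. Hence $\Z/p$ acts freely on $\R^{2d}\setminus\{0\}$ and on the sphere of rays $S^{2d-1}$, and the quotient $v \mapsto [v]$ is a $\Z/p$-equivariant homotopy equivalence $\R^{2d}\setminus\{0\} \simeq_{\Z/p} S^{2d-1}$. Composing with $\Phi$ then yields a $\Z/p$-equivariant map from a free space of coindex $2d+1$ to a free space of index $2d-1$, contradicting Dold's theorem (the $\Z/p$ Borsuk--Ulam theorem). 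Thus $\Phi$ must hit $0$, which is the Carath\'eodory claim.

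The step I expect to be the main obstacle is verifying equivariance of $\Phi$ when the $\Z/p$-action on $\R^{2d}$ is only assumed to preserve rays, since $g\sum_i \lambda_i v_i = \sum_i \lambda_i g v_i$ holds a priori only in the affine case. A plausible remedy is either to reduce to an equivariantly equivalent linear action on $\R^{2d}$ and transport $\gamma$ accordingly, or to directly construct a $\Z/p$-equivariant map $X \to S^{2d-1}$ from the assumed nonvanishing of $\Phi$ that bypasses the nonlinearity of the target action; the Borsuk--Ulam index comparison itself is insensitive to the choice of free action on $S^{2d-1}$.
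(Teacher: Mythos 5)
Your proof is correct and takes essentially the same route as the paper's. Both arguments boil down to the same three ingredients: the observation that the join $(S^1)^{*(d+1)}\cong S^{2d+1}$ carries a free diagonal $\Z/p$-action; the convex-combination map $\lambda_1 x_1\oplus\dots\oplus\lambda_{d+1}x_{d+1}\mapsto\sum_i\lambda_i\gamma(x_i)$ into~$\R^{2d}$, which must hit~$0$ by Dold's theorem (Theorem~\ref{thm:dold}); and the pigeonhole fact (together with Lemma~\ref{lem:hahn-banach}) that any $d+1$ points of $S^1$ lie in a closed arc of length~$\frac{d}{d+1}$. The only difference is packaging: the paper sends $(S^1)^{*(d+1)}$ into the metric thickening $\mathcal P(S^1,\mathcal F)$ by $\Phi(\lambda_1x_1\oplus\dots)=\sum_i\lambda_i\delta_{x_i}$ and then invokes the general machinery of Theorem~\ref{thm:cstm-eq}; your proof simply inlines the composition $\Gamma\circ\Phi$ and applies Dold directly, which is cleaner for this one theorem but forgoes the reusable framework.

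The subtlety you flagged --- that $\sum_i\lambda_i(g\cdot v_i)=g\cdot\sum_i\lambda_iv_i$ is a priori only available when $g$ acts linearly --- is genuine, but it is not a defect of your proposal relative to the paper: the paper's own proof of Theorem~\ref{thm:cstm} performs exactly this step, $\Gamma(g\cdot\mu)=\sum_i\lambda_i\gamma(g\cdot x_i)=g\cdot\sum_i\lambda_i\gamma(x_i)$, without comment, and the theorem statement asks only that the action have $0$ as its unique fixed point. In the intended applications (such as Remark~\ref{rem:trigpolys}) the $\Z/p$-action on~$\R^{2d}$ is linear and there is nothing to check. Your first proposed remedy would not work as stated: conjugating to a linear action replaces $\gamma$ by $h\circ\gamma$, and $0\in\conv\{h(\gamma(t_i))\}$ does not transport back to $0\in\conv\{\gamma(t_i)\}$ when $h$ is nonlinear, so the conclusion about the original functionals $f_a=\langle a,\gamma(\cdot)\rangle$ is lost. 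The honest fix is to assume the action is linear (or more generally that it commutes with convex combinations on the relevant range of~$\gamma$), which is consistent with how the paper uses the result.
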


The bound of $\frac{d}{d+1}$ for the length of intervals in Theorem~\ref{thm:equivariant} is optimal since Babenko's original result is. Theorem~\ref{thm:equivariant} guarantees the existence of one interval~$I$ of length~$\frac{d}{d+1}$, where every~$f_a$ has a zero, while Babenko's theorem for trigonometric polynomials asserts this for every such interval; the statement for all intervals follows from Theorem~\ref{thm:equivariant} since by the circle symmetry of~$\gamma_d$ all closed intervals of a given length are the same up to symmetry; see Remark~\ref{rem:homogeneous}.

We derive structural results for the zeros of compositions of linear functionals with $(\Z/p)^n$-equivariant maps from the $n$-torus; see Theorem~\ref{thm:Zp-torus}. Here we first state these results in the special case of
\emph{multivariate trigonometric polynomials}, that is, linear combinations of the functions $(t_1, \dots, t_n) \mapsto \cos(2\pi\sum \alpha_it_i)$ and $(t_1, \dots, t_n) \mapsto \sin(2\pi\sum \alpha_it_i)$ for non-negative integers~$\alpha_i$ that are not all zero. The set $S_f$ of all $n$-tuples of coefficients $(\alpha_1, \dots, \alpha_n)$ is the \emph{spectrum} of the multivariate trigonometric polynomial~$f$. Multivariate trigonometric polynomials are zero mean maps $(S^1)^n \to \R$. Earlier results of this form are due to Kozma and Oravecz~\cite{kozma2002} and Steinerberger~\cite{steinerberger2024}; see Subsection~\ref{subsec:trig}. In the following the torus~$(S^1)^n$ carries the $\ell_2$-metric inherited from the usual Euclidean metric on~$[0,1]^n$. We prove (see Section~\ref{sec:multivariate}):

\begin{theorem}
\label{thm:multivariate}
    Let $n \ge 1$ be an integer, and let $p \ge 3$ be a prime. Let $S \subset (\Z_{\ge 0})^n \setminus \{0\}$ be a set of cardinality at most $\frac12 p^n-1$, and let $r = \sqrt{\frac14(n-1)+(\frac{p-2}{2p})^2}$. Further assume that for every $\alpha\in S$ some coordinate $\alpha_i$ is not divisible by~$p$. Then every multivariate trigonometric polynomial with spectrum $S$ has a zero on any closed geodesic ball of radius~$r$ in~$(S^1)^n$.
\end{theorem}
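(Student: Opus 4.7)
The plan is to apply Theorem~\ref{thm:Zp-torus} to the map $\gamma\colon (S^1)^n \to \R^{2|S|}$ defined by
\[
\gamma(t) = \bigl(\cos(2\pi\langle \alpha,t\rangle),\sin(2\pi\langle\alpha,t\rangle)\bigr)_{\alpha \in S},
\]
whose linear compositions $f_a(t) = \langle a,\gamma(t)\rangle$ range over exactly the multivariate trigonometric polynomials with spectrum contained in~$S$. Let $(\Z/p)^n$ act on $(S^1)^n$ by translations $g\cdot t = t + g/p$ and on $\R^{2|S|}$ by rotating the $\alpha$-labelled coordinate pair through angle $2\pi\langle g,\alpha\rangle/p$; the map $\gamma$ is then $(\Z/p)^n$-equivariant. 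The hypothesis that every $\alpha\in S$ has some coordinate not divisible by~$p$ is equivalent to $\alpha\not\equiv 0\pmod p$, which forces the common $(\Z/p)^n$-fixed set in $\R^{2|S|}$ to be just~$\{0\}$: a nonzero common fixed vector would have some nonzero $\alpha$-block, requiring $\langle g,\alpha\rangle\equiv 0\pmod p$ for every $g\in(\Z/p)^n$ and hence $\alpha\equiv 0\pmod p$, a contradiction.

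Take $\mathcal F$ to be the family of closed geodesic balls of radius~$r$ in~$(S^1)^n$. The key geometric fact motivating the value of~$r$ is that, for every $x\in(S^1)^n$ and every coordinate direction~$i$, the ball $B(x,r)$ contains the slab
\[
W_i(x) = \{t\in(S^1)^n : d_{S^1}(t_i,x_i)\le \tfrac{p-2}{2p}\},
\]
since any $t\in W_i(x)$ satisfies
\[
d(t,x)^2 \;\le\; (n-1)\cdot\bigl(\tfrac12\bigr)^2 + \bigl(\tfrac{p-2}{2p}\bigr)^2 \;=\; r^2.
\]
In each direction~$i$, the $p$ translates by $\tfrac{k}{p}$, $k=0,\dots,p-1$, of an arc of length $\tfrac{p-2}{p}$ in~$S^1$ cover~$S^1$ with every point lying in at least $p-2$ of them: the complementary open arc has length $\tfrac{2}{p}$ and so meets at most two of the $p$ equispaced lattice points $\tfrac{k}{p}$. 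Feeding this directionwise covering into the $(\Z/p)^n$-orbit structure on the torus shows that the translates of $B(x,r)$ cover $(S^1)^n$ in a highly connected manner, which is the input required by Theorem~\ref{thm:Zp-torus}. The cardinality bound $|S|\le\tfrac12 p^n-1$, equivalently $2|S|\le p^n-2$, is exactly the dimension threshold so that the cohomological $(\Z/p)^n$-index of the punctured representation $\R^{2|S|}\setminus\{0\}$ falls strictly below the index of this cover.

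Theorem~\ref{thm:Zp-torus} then yields a single ball $B(x_0,r)$ on which every $f_a$ has a zero. Since the class of trigonometric polynomials with spectrum contained in~$S$ is translation-invariant---shifting $t$ by $\tau$ only rotates each coefficient pair $(a_\alpha,b_\alpha)$ by the angle $2\pi\langle\alpha,\tau\rangle$, preserving its nonvanishing---and $\mathcal F$ is translation-invariant, this upgrades to the statement that every closed geodesic ball of radius~$r$ contains a zero of every trigonometric polynomial with spectrum contained in~$S$, which includes those of spectrum exactly~$S$. The principal obstacle is the middle paragraph: turning the slab-covering estimate into the precise equivariant topological hypothesis of Theorem~\ref{thm:Zp-torus}, namely verifying that the $(\Z/p)^n$-index of the equivariant nerve of the cover $\{g\cdot B(x_0,r) : g\in(\Z/p)^n\}$ strictly exceeds~$2|S|$, so that no $(\Z/p)^n$-equivariant map from this nerve into $\R^{2|S|}\setminus\{0\}$ can exist.
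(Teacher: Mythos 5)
Your top-level strategy is the paper's: build $\gamma(t)=(\cos 2\pi\langle\alpha,t\rangle,\sin 2\pi\langle\alpha,t\rangle)_{\alpha\in S}$, observe that the hypothesis on $S$ makes the induced $(\Z/p)^n$-action on $\R^{2|S|}\setminus\{0\}$ fixed-point free, apply Theorem~\ref{thm:Zp-torus} to get one good geodesic ball, and then use the homogeneity of $(S^1)^n$ (Remark~\ref{rem:homogeneous}, which is what your translation argument reproduces) to get all geodesic balls. That part is correct and matches the paper.

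Where you have gone wrong is in the middle paragraph and in what you call the ``principal obstacle.'' Theorem~\ref{thm:Zp-torus} is a closed statement whose only hypotheses are (i) $\gamma$ is $(\Z/p)^n$-equivariant, (ii) the action on $\R^d\setminus\{0\}$ has no fixed points, and (iii) $d\le p^n-3$ (here one uses that $p$ is odd so $|S|\le \frac12 p^n-1$ forces $2|S|\le p^n-3$). The radius $r$ and the connectivity/covering analysis are already baked into Theorem~\ref{thm:Zp-torus} via Lemma~\ref{lem:torus-grid}; you are not supposed to re-derive them when applying the theorem. So there is no remaining obstacle: once (i)--(iii) are checked, the conclusion follows and you are done. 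Your slab computation, while numerically consistent with the value of $r$, is also not the argument that actually produces $r$ in the paper; Lemma~\ref{lem:torus-grid} works with the $p^n$-point grid $L=\{(k_1/p,\dots,k_n/p)\}$ and the observation that for each $x\in L$ the set $L\setminus\{x\}$ lies in a geodesic ball of radius $r$, and then Theorem~\ref{thm:Zp-torus} feeds the boundary of the simplex on $L$ (a $(p^n-3)$-connected sphere) into Volovikov's theorem. If you were writing out a proof of Theorem~\ref{thm:Zp-torus} itself, you would need that grid construction, not the per-coordinate slab cover. As a proof of Theorem~\ref{thm:multivariate}, though, your middle paragraph should simply be deleted and replaced by a direct citation of Theorem~\ref{thm:Zp-torus} together with the arithmetic check $2|S|\le p^n-3$.
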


Such results do not trivially follow from the $1$-dimensional case by restricting the inputs to $(t_1, 0, \dots, 0)$: The restriction of a multivariate trigonometric polynomial to a single non-zero variable is in general not a trigonometric polynomial, but differs from a trigonometric polynomial by an additive constant; in particular, this restriction generally does not have zero mean.

We say that a function $f\colon X\to \R$ \emph{changes sign} on~$A\subset X$ if there are $x,y \in A$ with $f(x) \ge0$ and $f(y)\le0$. We prove the following variant of Theorem~\ref{thm:equivariant} (see Section~\ref{sec:cara}):

\begin{theorem}
\label{thm:preimage-vr}
   Let $\gamma\colon S^1 \to \R^{2d}$ be a $\Z/p$-equivariant map for some prime $p \ge 2$ and some $\Z/p$-action on~$\R^{2d}$ whose only fixed point is~$0$. Then there is a set $A \subset S^1$ of diameter at most~$\frac{d}{2d+1}$ such that all compositions of $\gamma$ with linear functionals, that is, maps $f_a \colon S^1 \to \R$ given by $f_a(x) = \langle a, \gamma(x) \rangle$ for $a \in \R^{2d}$, change sign on~$A$. 
\end{theorem}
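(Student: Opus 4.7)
The plan is to argue by contradiction using a Vietoris--Rips complex of~$S^1$ together with a Borsuk--Ulam-type bound for $\Z/p$-equivariant maps. First I would reformulate the conclusion convex-geometrically: for a finite set $A \subset S^1$, every $f_a$ changes sign on~$A$ if and only if $0 \in \conv(\gamma(A))$, since if $0 \notin \conv(\gamma(A))$ then some hyperplane strictly separates~$0$ from~$\gamma(A)$ and the corresponding linear functional has constant sign on~$A$. By Carath\'eodory we may then restrict to $|A| \le 2d+1$, so the goal becomes: find $A \subset S^1$ with $\mathrm{diam}(A) \le \tfrac{d}{2d+1}$ and $0 \in \conv(\gamma(A))$.

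For $r > 0$, let $VR(S^1;r)$ be the Vietoris--Rips complex whose simplices are the finite subsets of~$S^1$ of diameter at most~$r$. Extending $\gamma$ linearly on simplices yields a continuous $\Z/p$-equivariant map $\tilde\gamma\colon |VR(S^1;r)| \to \R^{2d}$ (using the rotation action on~$S^1$ and the given action on~$\R^{2d}$), and a set~$A$ of diameter at most~$r$ with $0 \in \conv(\gamma(A))$ exists if and only if~$0$ lies in the image of~$\tilde\gamma$. Suppose for contradiction that no such~$A$ exists at $r = \tfrac{d}{2d+1}$. A compactness argument (Carath\'eodory bounds $|A|$ uniformly, and one extracts a limiting witness from convergent subsequences of the points and their barycentric coefficients) produces $\epsilon > 0$ such that the same failure persists at $r' = \tfrac{d}{2d+1} + \epsilon$. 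Hence $\tilde\gamma$ sends $|VR(S^1;r')|$ into $\R^{2d} \setminus \{0\}$, and radial normalization gives a $\Z/p$-equivariant map to~$S^{2d-1}$; the induced action on~$S^{2d-1}$ is free because~$0$ is the only fixed point of the action on~$\R^{2d}$ and $p$ is prime.

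The contradiction then comes from known computations of the homotopy types of Vietoris--Rips complexes of the circle: for $r' \in \bigl(\tfrac{d}{2d+1},\tfrac{d+1}{2d+3}\bigr)$, the complex $VR(S^1;r')$ is equivariantly homotopy equivalent to~$S^{2d+1}$ with the standard free rotation action, so $\ind_{\Z/p}(|VR(S^1;r')|) = 2d+1$. Since $\ind_{\Z/p}(S^{2d-1}) = 2d-1$ under any free $\Z/p$-action and a $\Z/p$-equivariant map cannot decrease the index, we get $2d+1 \le 2d-1$, a contradiction. The main obstacle is that the Vietoris--Rips complex changes homotopy type exactly at the critical radius $\tfrac{d}{2d+1}$; the compactness step sidesteps this delicate boundary so the homotopy-type computation can be applied cleanly at $r'$ slightly above the threshold.
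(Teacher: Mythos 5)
Your approach is essentially the same as the paper's: reduce to finding a finite $A$ of small diameter with $0 \in \conv(\gamma(A))$ (Carath\'eodory / Hahn--Banach), use the Adamaszek--Adams computation of the homotopy type of Vietoris--Rips complexes of the circle, and invoke a Dold-style Borsuk--Ulam bound to force a zero. The main stylistic difference is that you extend $\gamma$ linearly over the geometric realization of the Vietoris--Rips complex, whereas the paper routes the argument through the metric thickening $\mathcal P(S^1,\mathcal F)$ (probability measures with Wasserstein metric) and then uses Theorem~\ref{thm:gillespie} to transfer the connectivity of the simplicial complex to the thickening. Your linear-extension map is exactly the composite of the natural equivariant map $\mathcal F \to \mathcal P(S^1,\mathcal F)$ with the paper's $\Gamma$, so the two are equivalent; yours is slightly more direct because it avoids the thickening machinery, at the cost of working with the weak topology on an infinite simplicial complex. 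Your compactness step (suppose no witness at the critical radius, push to a slightly larger $r'$, extract a contradiction) is the contrapositive of the paper's limit $r \to \frac{d}{2d+1}$ and is equally valid.

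One substantive overclaim to fix: you assert that $VR(S^1;r')$ is \emph{equivariantly} homotopy equivalent to $S^{2d+1}$ with the standard free rotation, and then compute $\ind_{\Z/p}$ from that. Theorem~\ref{thm:vr} gives only a non-equivariant homotopy equivalence, and moreover the $\Z/p$-action on $VR(S^1;r')$ need not be free (for small $p$ the orbit $\{0,\tfrac1p,\dots,\tfrac{p-1}p\}$ can have diameter below $r'$, giving a setwise-fixed simplex whose barycenter is a fixed point). Neither fact is actually needed: Theorem~\ref{thm:dold}, as stated in the paper, only requires the $\Z/p$-action on the \emph{domain} $X$ to be cellular (which it is, being induced by a permutation of vertices) and the action on the \emph{target} sphere $S^{2d-1}$ to be free (which holds since the original action on $\R^{2d}$ has $0$ as its only fixed point and $p$ is prime). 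Combined with the $2d$-connectivity of $VR(S^1;r')$, that already forbids an equivariant map to $S^{2d-1}$. Replacing the index computation by this direct appeal to Dold closes the gap and aligns your argument with the paper's.
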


In particular, the same holds for compositions of linear functionals with convex curves, that is, for any $(2d+1)$-dimensional Chebyshev space $V \subset C(S^1, \R)$ there is a set $A \subset S^1$ of diameter at most~$\frac{d}{2d+1}$ such that every $f\in V$ with zero mean changes sign on~$A$. For $\gamma(t) = (\cos 2\pi kt, \sin 2\pi kt)_{k \in S}$, $S \subset \Z_{>0}$ of size~$d$, Theorem~\ref{thm:preimage-vr} implies that trigonometric polynomials with frequencies in~$S$ all change sign on some set $A\subset S^1$ of diameter at most~$\frac{d}{2d+1}$.

Not every set $A \subset S^1$ of diameter at most~$\frac{d}{2d+1}$ can be extended to a connected set of diameter at most~$\frac{d}{2d+1}$; the vertices of a regular $(2d+1)$-gon are one example. Thus a change of sign does not imply that there is a set $A$ of diameter at most~$\frac{d}{2d+1}$ where all $f$ have a zero. Theorem~\ref{thm:preimage-vr} was previously established for \emph{raked trigonometric polynomials} of degree at most~$2d-1$, that is, for linear combinations of the functions $t\mapsto \sin(2\pi kt)$ and $t\mapsto \cos(2\pi kt)$ for $k \in \{1,3, \dots, 2d-1\}$; see~\cite{adams2020}. Theorem~\ref{thm:preimage-vr} generalizes this to trigonometric polynomials of other frequencies and--more generally--to compositions of linear functionals with an arbitrary $\Z/p$-equivariant map. Raked trigonometric polynomials also show that this result is optimal~{\cite[Thm.~4]{adams2020}}.

Understanding constraints on sets $A$ where all $f$ change sign is natural, since those are precisely the sets such that evaluation in the points of~$A$ yields formulas for efficient numerical integration; see Proposition~\ref{prop:sign-change}.
We may thus use our topological framework to establish a result concerning efficient numerical integration. A notable recent result employing topological methods for this problem area was the breakthrough of Bondarenko, Radchenko, and Viazovska~\cite{bondarenko2013} giving asymptotically optimal bounds for spherical designs. Recall that a path-connected metric space is \emph{$c$-connected} if the first $c$ homotopy groups vanish.

\begin{theorem}
\label{thm:int-eq}
  Let $X$ be a $c$-connected compact metric space with probability measure~$\mu$ and an action by the nontrivial group~$G$. Let $\gamma \colon X \to \R^n$ be a $G$-equivariant map, where the $G$-action on~$\R^n$ is free on~$\R^n \setminus \{0\}$. Let $V$ be the $(n+1)$-dimensional vector space of compositions of affine functionals with~$\gamma$, that is, of maps $g(x) = \langle a, \gamma(x) \rangle +b$ for $a\in \R^n$ and $b\in \R$. Let $k$ be an integer with $k(c+2) > n$. Then there is a set $A \subset X$ of cardinality at most $k$ and a function $w\colon A\to [0,1]$ such that for every $g \in V$ the integral $\int_{X} g(x) \ d\mu$ is equal to $\sum_{a\in A} g(a)\cdot w(a)$.
\end{theorem}

This improves bounds in the Richter--Tchakaloff cubature theorem on the size of the set~$A$ from $n+1$ to $\lceil\frac{n+1}{c+2}\rceil$, provided that~$V$ consists of compositions of affine functionals with an equivariant map; see Subsection~\ref{subsec:int} for additional context.

The proofs of all of our results use the same basic equivariant-topological approach that works in some generality. In Section~\ref{sec:approach} we provide general results for compositions of linear functionals with a $G$-equivariant map~$\gamma$ from a compact metric space~$X$ to~$\R^n$. We consider a collection~$\mathcal F$ of closed subsets of~$X$. We show that if the space of probability measures with support entirely within a set $A \in \mathcal F$, equipped with a Wasserstein metric of optimal transport, is $(n-1)$-connected, then there is an $A\in \mathcal F$ such that any composition of a linear functional with $\gamma$ changes sign on~$A$. All results advertised above follow from this setup.

\subsection*{Structure of the manuscript}
Our results summarize and generalize Carath\'eodory-type theorems for convex curves, results about the distribution of zeros of (multivariate) trigonometric polynomials, and cubature rules for efficient integration. For this we use standard results from equivariant topology. 
\begin{compactitem}
    \item We give the necessary background and context in Section~\ref{sec:context}. 
    \item We develop our general topological approach in Section~\ref{sec:approach}. There we state and prove Lemma~\ref{lem:cstm-conn} and its variant Lemma~\ref{lem:cstm-eq} that are at the core of our setup and imply all of our main results. The section concludes with a proof of Theorem~\ref{thm:int-eq}. 
    \item The image of a convex closed curve in~$\R^{2d}$ is the orbit of a circle action that is free away from the origin. This is a standard result of algebraic geometry, as we explain in Section~\ref{sec:cara}. There we give an elementary proof of the approximate version for $\Z/p$-actions and $p$ an arbitrarily large prime, which is all that we need to provide a new topological proof of the known result that any point in the convex hull of the image of a closed convex curve is a convex combination of at most $d+1$ points along the curve; see Section~\ref{sec:cara}. Since our proof only uses $\Z/p$-equivariance and not convexity of the curve, our method immediately extends to yield proofs of Theorems~\ref{thm:equivariant} and~\ref{thm:preimage-vr}. 
    \item In Section~\ref{sec:multivariate} we use our framework to prove results for compositions of linear functionals and $(\Z/p)^n$-equivariant maps on the torus. In particular, we prove Theorem~\ref{thm:multivariate} on the distribution of zeros of multivariate trigonometric polynomials.
\end{compactitem}

\section{Context and methods}
\label{sec:context}

Our results constrain the zeros of trigonometric polynomials, generalize Carath\'eodory-type results for convex curves to images of equivariant maps, and consequently yield efficient formulas for numerical integration. Further, our results may be seen as generalizations of Borsuk--Ulam-type theorems to the ``overconstrained'' setting, where generically equivariant maps will not have zeros, yet compositions with linear functionals will exhibit zeros on some set of quantifiable size. Our methods combine Borsuk--Ulam-type results and a construction in metric geometry originally motivated by applications in computational topology and topological data analysis: Metric thickenings. Here we collect the most important context, definitions, notation, and results.

\subsection{Roots of trigonometric polynomials}
\label{subsec:trig}
 A trigonometric polynomial of degree~$d$ is a linear combination of the real and imaginary parts of a complex polynomial of degree~$d$ restricted to the unit circle $S^1 \subset \C$, that is, up to identifying the circle $S^1$ with $[0,1] / (0 \sim 1)$, a function of the form $f(t) = \sum_{k=1}^d a_k\cos(2\pi kt) + b_k\sin(2\pi kt)$, where at least one of $a_d$ or $b_d$ is nonzero. We emphasize that we only consider homogeneous trigonometric polynomials, where we do not allow an additive constant. Thus $\int_0^1 f(t) \ dt = 0$; we say that $f$ is a \emph{zero mean} function. In the following we will always identify $S^1$ with the unit interval $[0,1]$ with its endpoints glued together. In particular, $S^1$ has circumference $1$ and diameter~$\frac12$. Thus the $n$-dimensional torus $(S^1)^n$ is obtained from the cube $[0,1]^n$ by gluing opposite facets together without a twist. The torus thus inherits a metric from the usual Euclidean metric on~$[0,1]^n$, and in particular the torus has diameter~$\frac12\sqrt{n}$.

 \begin{theorem}[Babenko~\cite{babenko1984}]
 \label{thm:babenko}
    Let $I \subset S^1$ be a closed interval of length~$\frac{d}{d+1}$. Then any trigonometric polynomial of degree~$d$ has a zero on~$I$.
 \end{theorem}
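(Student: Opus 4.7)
The plan is to derive this classical result from Theorem~\ref{thm:equivariant} combined with the translation invariance of the space of trigonometric polynomials of degree at most~$d$. First, consider the curve $\gamma \colon S^1 \to \R^{2d}$ defined by
$$\gamma(t) = (\cos 2\pi t,\, \sin 2\pi t,\, \cos 4\pi t,\, \sin 4\pi t,\, \ldots,\, \cos 2\pi dt,\, \sin 2\pi dt),$$
so that every trigonometric polynomial of degree at most~$d$ has the form $f_a(t) = \langle a, \gamma(t)\rangle$ for some $a \in \R^{2d}$.

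Next, I would endow the target with a $\Z/p$-action making $\gamma$ equivariant. Choose any prime $p > d$ (such a prime exists by Euclid); let $\Z/p$ act on $S^1$ by the rotation $t \mapsto t + \tfrac{1}{p}$, and on $\R^{2d} \cong \bigoplus_{k=1}^d \R^2$ by rotating the $k$-th summand through angle $\tfrac{2\pi k}{p}$. The addition formulas for sine and cosine immediately imply that $\gamma$ is $\Z/p$-equivariant. Because $p$ is prime and $1 \le k \le d < p$, each rotation angle $\tfrac{2\pi k}{p}$ is nontrivial, so the only fixed point of the action on each summand, and hence on~$\R^{2d}$, is the origin. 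Theorem~\ref{thm:equivariant} then produces \emph{some} closed interval $I_0 \subset S^1$ of length $\tfrac{d}{d+1}$ on which every $f_a$ has a zero.

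To upgrade ``some'' to ``any'', as Babenko's statement requires, I would invoke translation invariance: given an arbitrary closed interval $I \subset S^1$ of length $\tfrac{d}{d+1}$, let $\tau$ be the rotation sending $I_0$ to~$I$. For any trigonometric polynomial $f$ of degree at most~$d$, the shifted function $t \mapsto f(t + \tau)$ is again such a polynomial (by the addition formulas), and hence vanishes somewhere in $I_0$ by the previous step; therefore $f$ itself vanishes somewhere in~$I$.

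The reduction above is purely formal; the main obstacle is packaged entirely inside Theorem~\ref{thm:equivariant}, whose proof combines a Borsuk--Ulam-type obstruction for the free $\Z/p$-action on $\R^{2d} \setminus \{0\}$ with a connectivity estimate for the metric thickening of the family of short circular arcs. Once that theorem is available, Babenko's sharp bound $\tfrac{d}{d+1}$ appears simply because the largest gap between $d+1$ points on a circle of circumference~$1$ is at least~$\tfrac{1}{d+1}$, so the points always lie in a common arc of length at most~$\tfrac{d}{d+1}$.
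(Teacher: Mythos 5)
Your proposal is correct and follows essentially the same approach as the paper. The paper derives Babenko's theorem from Theorem~\ref{thm:preimage} via the homogeneity argument of Remark~\ref{rem:homogeneous}, which is exactly the translation-invariance step you carry out explicitly, and the $\Z/p$-equivariance of the moment curve for $p>d$ is the same observation the paper records in Remark~\ref{rem:trigpolys}; your route through Theorem~\ref{thm:equivariant} directly (rather than via the Chebyshev-space formulation of Theorem~\ref{thm:preimage}) is a slightly more streamlined version of the same argument.
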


 Also see Gilbert and Smyth~\cite{gilbert2000} for further results. Tabachnikov~\cite{tabachnikov1997} exploits the cyclic symmetry of trigonometric polynomials to give upper bounds for the measure of the set, where a trigonometric polynomial may be non-positive. Ismailov~\cite{ismailov2023} proved generalizations to arbitrary sets of frequencies and generalizes further beyond trigonometric polynomials. Variants of these results for multivariate trigonometric polynomials have been studied:

  \begin{theorem}[Kozma and Oravecz~\cite{kozma2002}]
  \label{thm:koz-ora}
    Let $S \subset \Z^n \setminus \{0\}$ be finite with $-S = S$. Let $f\colon [0,1]^n \to \R$ be given by $f(x) = \sum_{\lambda \in S} c(\lambda)\exp(2\pi i\langle x, \lambda\rangle)$. The induced map $\widehat f\colon (S^1)^n \to \R$ has a zero on any closed geodesic ball of radius~$\sum_{\lambda \in S} \frac{1}{4\sqrt{\langle \lambda, \lambda\rangle}}$. 
 \end{theorem}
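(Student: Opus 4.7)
The plan is dual in nature: for any center $c \in (S^1)^n$, I will construct a probability measure $\nu_c$ supported in the closed geodesic ball $B(c,r)$ of radius $r = \sum_{\lambda \in S}\tfrac{1}{4|\lambda|}$ whose torus Fourier coefficients vanish at every $\lambda \in S$. Because $\widehat f$ is a real linear combination of the exponentials $e^{2\pi i\langle\cdot,\lambda\rangle}$ with $\lambda \in S$, integrating $\widehat f$ against $\nu_c$ then yields zero, which is incompatible with $\widehat f$ having strict sign on $B(c,r)$; by continuity $\widehat f$ must vanish somewhere in $B(c,r)$.

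The key is to produce the measure one pair at a time. Partition $S$ into pairs $\{\lambda,-\lambda\}$, choose a representative $\lambda$ from each pair, and set $v_\lambda = \lambda/\langle\lambda,\lambda\rangle$, so that $\langle v_\lambda,\pm\lambda\rangle = \pm 1$. Let $\sigma_\lambda$ be the uniform probability measure on the segment $\{tv_\lambda : t \in [-\tfrac12,\tfrac12]\} \subset \R^n$, which is contained in the Euclidean ball of radius $|v_\lambda|/2 = 1/(2|\lambda|)$ about the origin. A direct computation gives
$$\widehat{\sigma_\lambda}(\mu) = \int_{-1/2}^{1/2} e^{2\pi i t\langle v_\lambda,\mu\rangle}\,dt = \frac{\sin(\pi\langle v_\lambda,\mu\rangle)}{\pi\langle v_\lambda,\mu\rangle},$$
which vanishes whenever $\langle v_\lambda,\mu\rangle$ is a nonzero integer; in particular $\widehat{\sigma_\lambda}(\pm\lambda) = 0$.

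Next I convolve: with $k = |S|/2$ representatives $\lambda_1,\dots,\lambda_k$, set $\mu_0 = \sigma_{\lambda_1} \ast \cdots \ast \sigma_{\lambda_k}$, a probability measure on $\R^n$. Its Fourier transform is the product of the factors $\widehat{\sigma_{\lambda_i}}$, so $\widehat{\mu_0}$ vanishes on all of $S$. Its support lies in the Minkowski sum of the individual segment supports, hence in the Euclidean ball of radius $\sum_{i=1}^k \tfrac{1}{2|\lambda_i|} = \sum_{\lambda\in S}\tfrac{1}{4|\lambda|} = r$ about the origin. For a chosen center $c \in (S^1)^n$, lift to $\tilde c \in \R^n$, translate $\mu_0$ to be centered at $\tilde c$, and push forward along the quotient map $\pi\colon \R^n \to (S^1)^n$ to obtain $\nu_c$. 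The push-forward is a probability measure; its support lies in $\pi(B_{\R^n}(\tilde c, r)) \subset B(c, r)$ because $\pi$ is $1$-Lipschitz, and its torus Fourier coefficient at any integer $\lambda$ equals $e^{2\pi i\langle \tilde c, \lambda\rangle}\widehat{\mu_0}(\lambda)$ because $e^{2\pi i\langle\cdot,\lambda\rangle}$ is $\Z^n$-invariant. Hence $\widehat{\nu_c}$ vanishes on $S$.

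If $\widehat f$ had no zero on $B(c,r)$ it would have constant sign there, forcing $\int_{(S^1)^n} \widehat f\,d\nu_c \ne 0$; but the Fourier expansion gives $\int \widehat f\,d\nu_c = \sum_{\lambda \in S} c(\lambda)\widehat{\nu_c}(\lambda) = 0$, a contradiction. The main obstacle I anticipate is bookkeeping around the lift-and-project step when the Euclidean ball $B_{\R^n}(\tilde c, r)$ wraps around the fundamental domain of the torus, so that the push-forward is not the evident restriction of a single copy of $\mu_0$; the formalism still delivers the correct Fourier coefficients and the claimed support containment, but verifying this cleanly requires handling overlapping sheets carefully and is where I would spend most of the technical effort.
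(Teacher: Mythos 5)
The paper does not prove Theorem~\ref{thm:koz-ora}; it is stated as background and attributed to Kozma and Oravecz, so there is no in-paper proof to compare against. Your argument, however, is correct and is the standard (indeed essentially the Kozma--Oravecz) duality argument: build a probability measure supported in the ball that annihilates every frequency in~$S$, and observe that a function of constant sign cannot integrate to zero against such a measure. The ingredients all check out: for a representative $\lambda$ of each pair $\{\lambda,-\lambda\}$, the uniform measure on the segment of length $|v_\lambda|=1/|\lambda|$ in direction $v_\lambda=\lambda/\langle\lambda,\lambda\rangle$ has Fourier transform $\mathrm{sinc}(\pi\langle v_\lambda,\cdot\rangle)$, which vanishes at $\pm\lambda$; the convolution over the $k=|S|/2$ representatives has Fourier transform equal to the product, hence vanishes on all of $S$; and its support lies in the Euclidean ball of radius $\sum_j \tfrac{1}{2|\lambda_j|}=\sum_{\lambda\in S}\tfrac{1}{4|\lambda|}=r$ since supports of convolutions lie in Minkowski sums. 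Translating by a lift $\tilde c$ of the center and pushing forward under the quotient $\pi\colon\R^n\to(S^1)^n$ lands the support inside the closed geodesic ball $B(c,r)$ because $\pi$ is $1$-Lipschitz, and the torus Fourier coefficient at $\lambda\in\Z^n$ picks up only a unimodular factor $e^{2\pi i\langle\tilde c,\lambda\rangle}$ because $x\mapsto e^{2\pi i\langle x,\lambda\rangle}$ descends to the torus.

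Your closing worry about ``overlapping sheets'' when $B_{\R^n}(\tilde c,r)$ wraps around is not actually a gap: pushforward of measures and the identity $\widehat{\pi_*\mu}(\lambda)=\widehat{\mu}(\lambda)$ for $\lambda\in\Z^n$ are valid regardless of whether $\pi$ is injective on the support, and the support containment only uses that $\pi$ is distance-nonincreasing. So the technicality you flag requires no further work. It is worth noting that this measure-theoretic duality route is genuinely different in flavor from the equivariant-topological framework the paper develops for its own Theorems~\ref{thm:multivariate} and~\ref{thm:Zp-torus}: the paper's approach yields bounds independent of the magnitudes $|\lambda|$ (depending instead on $|S|$ and a prime $p$), whereas the Kozma--Oravecz bound you reprove degrades as the frequencies shrink but improves rapidly for large frequencies, as the paper itself remarks in Subsection~\ref{subsec:trig}.
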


  \begin{theorem}[Steinerberger~\cite{steinerberger2024}]
  \label{thm:steinerberger}
    Let $S \subset \Z^n \setminus \{0\}$ be finite with $-S = S$. Let $f\colon [0,1]^n \to \R$ be given by $f(x) = \sum_{\lambda \in S} c(\lambda)\exp(2\pi i\langle x, \lambda\rangle)$. The induced map $\widehat f\colon (S^1)^n \to \R$ has a zero on any closed geodesic ball of radius~$n^{3/2}\sum_{\lambda \in \Lambda} \frac{1}{\lambda}$, where $\Lambda = \{\sqrt{\langle \lambda, \lambda\rangle} \mid \lambda\in S\}$.
 \end{theorem}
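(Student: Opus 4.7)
The plan is a proof by contradiction. Suppose $\widehat f$ has no zero on some closed geodesic ball $B \subset (S^1)^n$ of radius $R := n^{3/2}\sum_{\lambda\in\Lambda}\tfrac{1}{\lambda}$; replacing $f$ by $-f$ if necessary, assume $\widehat f > 0$ on $B$. Then $\int \widehat f\,d\mu > 0$ for every Borel probability measure $\mu$ on $(S^1)^n$ with $\mathrm{supp}(\mu)\subset B$. Expanding $\widehat f$ via its Fourier representation and swapping sum and integral yields
\[
\int \widehat f\,d\mu \;=\; \sum_{\lambda\in S} c(\lambda)\,\widehat\mu(-\lambda),\qquad \widehat\mu(\lambda):=\int e^{-2\pi i\langle x,\lambda\rangle}\,d\mu(x).
\]
So it suffices to produce a probability measure $\mu$, supported in a geodesic ball of radius $R$, with $\widehat\mu(\lambda)=0$ for every $\lambda \in S$; translating its support into $B$ then yields the contradiction. (Note that $S=-S$ removes any distinction between killing $\widehat\mu(\lambda)$ and $\widehat\mu(-\lambda)$.)

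I would build $\mu$ by convolving single-frequency cancellations. For each norm $s \in \Lambda$ I would construct a probability measure $\mu_s$ on $(S^1)^n$ supported in a closed geodesic ball of radius $r_s$ with $\widehat\mu_s(\lambda)=0$ for every $\lambda \in S$ with $|\lambda|=s$. The natural candidate is the rotationally symmetric Euclidean-ball average $\mu_s \propto \mathbf{1}_{B(0,r_s)}\,dx$ pulled back to the torus: by rotational symmetry $\widehat\mu_s(\lambda)$ depends only on $|\lambda|$ and evaluates (up to normalization) to $J_{n/2}(2\pi r_s s)/(2\pi r_s s)^{n/2}$, so choosing $r_s$ so that $2\pi r_s s$ equals the first positive zero of $J_{n/2}$ kills every such Fourier coefficient simultaneously. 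The convolution $\mu := \mu_{s_1} * \mu_{s_2} * \cdots$ over $s_i \in \Lambda$ then satisfies $\widehat\mu(\lambda)=\prod_s \widehat\mu_s(\lambda)=0$ for every $\lambda \in S$, and its support lies in the Minkowski sum of the individual supports, hence in a geodesic ball of radius $\sum_{s\in\Lambda}r_s$.

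What remains is to verify the quantitative bound $r_s \le n^{3/2}/s$. The asymptotic $j_{n/2,1}=\Theta(n)$ for the first positive Bessel zero yields a Euclidean radius of order $n/s$. Passing from Euclidean balls in $\R^n$ to geodesic balls on $(S^1)^n$---whose flat metric has diameter $\tfrac12\sqrt n$---is where the extra $\sqrt n$ factor naturally enters: the required comparison between these two notions of radius, together with keeping the construction inside a single fundamental domain of the torus, costs at most a factor of~$\sqrt n$ and lands the bound at $n^{3/2}/s$.

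The main obstacle I expect is pinning down the constants so that the bound is exactly $n^{3/2}/s$ rather than a weaker polynomial in~$n$: both the Bessel-zero estimate $j_{n/2,1} \le cn$ and the Euclidean-versus-geodesic comparison admit sharp forms, but combining them cleanly without accumulating extra $n$'s requires care. A secondary subtlety is the regime where $r_s$ would exceed the injectivity radius of $(S^1)^n$; there one must periodize $\mu_s$ on the torus in a way that preserves the Fourier vanishing, or instead argue that the smallest norms $s\ge 1$ in any integer lattice spectrum keep the construction confined to a single fundamental domain for moderate~$n$.
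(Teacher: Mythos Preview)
The paper does not contain a proof of this statement. Theorem~\ref{thm:steinerberger} appears in Section~\ref{sec:context} (``Context and methods'') as a quoted background result attributed to Steinerberger~\cite{steinerberger2024}; like the neighbouring Theorems~\ref{thm:babenko} and~\ref{thm:koz-ora}, it is stated without proof and used only for comparison with the paper's own bounds (Theorem~\ref{thm:multivariate}). There is therefore nothing in the paper to compare your proposal against.

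As for the proposal itself: the high-level strategy---construct a probability measure supported in a small ball whose Fourier transform vanishes on~$S$, then integrate $\widehat f$ against it---is sound and is indeed the mechanism behind results of this type. The gap is in your accounting of the exponent. On the flat torus $(S^1)^n$ with the metric inherited from $[0,1]^n$, a geodesic ball of radius below the injectivity radius $\tfrac12$ \emph{is} a Euclidean ball; there is no conversion factor between ``Euclidean'' and ``geodesic'' radii, so your proposed source of the extra $\sqrt n$ does not exist. Your Bessel-zero computation gives $r_s = j_{n/2,1}/(2\pi s)$, and since $j_{n/2,1}\sim n/2$ this yields a total radius of order $n\sum_{s\in\Lambda}1/s$, which is \emph{stronger} than the stated $n^{3/2}$ bound---so the discrepancy is not fatal, but your explanation of it is wrong. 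The genuine obstacle you should worry about is the one you flag as ``secondary'': for small frequencies (e.g.\ $s=1$) and large $n$, the required $r_s\sim n/(4\pi)$ far exceeds the injectivity radius, the ball wraps around the torus, and the rotational-symmetry argument for $\widehat\mu_s$ breaks down. Steinerberger's actual argument handles this differently; if you want a self-contained proof you will need either a periodized construction that preserves the Fourier vanishing or a different single-frequency building block.
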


 Recall that the diameter of the $n$-torus $(S^1)^n$ is~$\frac12\sqrt{n}$. While these results give strong bounds for large frequencies, the bounds are trivial in other cases. Our results provide nontrivial bounds independent of the spectrum, and in particular improve on Theorem~\ref{thm:steinerberger} for any spectrum~$S$, where $\sum_{\lambda \in \Lambda} \frac{1}{\lambda} > \frac{1}{2n}$ with~$\Lambda$ as above.

\subsection{Borsuk--Ulam results} 
Let $S^d$ denote the $d$-dimensional sphere. The classical Borsuk--Ulam theorem~\cite{borsuk1933} states that any continuous map $\gamma\colon S^d \to \R^d$ that is \emph{odd}, i.e.~$\gamma(-x) = -\gamma(x)$ for all $x \in S^d$, must have a zero.

The following is a simple rephrasing of the Borsuk--Ulam theorem: Let $V$ be a $d$-dimensional vector space of continuous odd maps $f\colon S^d \to \R$. Then there is an $x_0 \in S^d$ with $f(x_0) = 0$ for all $f\in V$. In other words, the functions in~$V$ have a common zero. This rephrasing follows from the classical statement by choosing a basis $f_1, \dots, f_d$ of~$V$ and defining $\gamma(x) = (f_1(x), \dots, f_d(x))$. Now if $V$ is an $n$-dimensional vector space of continuous odd maps $f\colon S^d \to \R$ for $n > d$ then the maps in $V$ will generically not have a common zero. In this case, there is a set $A \subset S^d$ of diameter at most~$\frac{n}{2n+2}$ such that all $f \in V$ change sign on~$A$; see~\cite{adams2020, crabb2023}.

Numerous generalizations of the Borsuk--Ulam theorem for maps respecting larger groups of symmetries have been proven. Let $G$ be a group and let $X$ and $Y$ be topological spaces with $G$-actions by homeomorphisms. Recall that a map $f\colon X \to Y$ is \emph{$G$-equivariant} if $f(g\cdot x) = g\cdot f(x)$ for all $x\in X$ and for all $g\in G$, and that a $G$-action on $X$ is \emph{free} if $g\cdot x \ne x$ for all $x\in X$ and all non-identity $g \in G$. A $G$-action on a CW complex~$X$ is \emph{cellular} if for every $g \in G$ and every cell $\sigma$ of~$X$, $g\cdot \sigma$ is again a cell of~$X$. A topological space $X$ is \emph{$n$-connected} for $n \ge 0$ if the first $n$ homotopy groups of $X$ vanish. We will need the following two results:

\begin{theorem}[Dold~\cite{dold1983}]
\label{thm:dold}
    Let $G$ be a nontrivial finite group that acts cellularly on the CW complex~$X$ and acts freely on the sphere~$S^n$. If there is a $G$-equivariant map $f\colon X \to S^n$ then $X$ is not $n$-connected. 
\end{theorem}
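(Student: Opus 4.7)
The plan is to argue by contradiction: assume $X$ is $n$-connected and use equivariant cohomology with $\mathbb{F}_p$-coefficients to derive a contradiction from the existence of the equivariant map $f\colon X\to S^n$. First I would reduce to the case $G=\Z/p$ for a prime $p$: pick a prime $p$ dividing $|G|$ and a subgroup $H\cong\Z/p$; then $f$ is also $H$-equivariant and $H$ still acts freely on $S^n$, so I may replace $G$ by $H$. This reduction is convenient because $H^\ast(BG;\mathbb{F}_p)$ is nonzero in every degree for $G=\Z/p$, while the orbit space $S^n/G$ (a lens space for odd $p$, a real projective space for $p=2$) is a CW complex of dimension exactly~$n$.

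Next I would form the Borel construction $X_G:=(X\times EG)/G$ and work with the fibration $X\to X_G\xrightarrow{\rho}BG$. The associated Serre spectral sequence in mod-$p$ cohomology has $E_2^{s,q}=H^s(BG;H^q(X;\mathbb{F}_p))$; since $X$ is $n$-connected, the rows $1\le q\le n$ all vanish. A short dimension count shows that no nonzero differential can enter or leave $E_r^{n+1,0}$: incoming differentials have source in a vanishing row, and outgoing differentials land in a position with negative fiber index. Consequently the edge homomorphism $\rho^\ast\colon H^i(BG;\mathbb{F}_p)\to H^i(X_G;\mathbb{F}_p)$ is an isomorphism for $i\le n$ and remains injective for $i=n+1$.

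On the other hand, the equivariant map $f$ descends to $\bar f\colon X_G\to Y:=(S^n\times EG)/G$. Because $G$ acts freely on $S^n$, the projection $Y\to S^n/G$ with contractible fiber $EG$ is a homotopy equivalence, and the other projection $Y\to BG$ coincides, up to this equivalence, with a natural inclusion $\iota\colon S^n/G\hookrightarrow BG$. Letting $\tilde f$ be $\bar f$ followed by a homotopy inverse of $Y\to S^n/G$, one has $\iota\circ\tilde f\simeq\rho$ because both maps $X_G\to BG$ classify the same principal $G$-bundle $X\times EG\to X_G$. Taking $H^{n+1}(-;\mathbb{F}_p)$, the map $\rho^\ast$ then factors through $H^{n+1}(S^n/G;\mathbb{F}_p)=0$ (vanishing by dimension), so $\rho^\ast=0$ in this degree, contradicting the injectivity above since $H^{n+1}(BG;\mathbb{F}_p)\ne 0$.

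The main obstacle I anticipate is the spectral sequence bookkeeping in degree $n+1$: the hypothesis of $n$-connectedness is exactly tight, so one must carefully exclude every possible nonzero differential into or out of $E_\ast^{n+1,0}$ to retain injectivity of $\rho^\ast$ in this degree. The complementary step, identifying $\iota\circ\tilde f\simeq\rho$, is in essence bookkeeping too, coming down to the observation that both maps classify the same $G$-bundle once the Borel construction is in place.
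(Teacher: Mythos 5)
Your proof is correct, but it is a genuinely different route from what the paper does. The paper does not prove this theorem from scratch: it cites Dold and then merely observes (in the paragraph following the theorem) that the stated form is equivalent to Dold's original statement---that a $G$-equivariant map $S^m\to S^n$ between free $G$-spheres forces $m\ge n$---by using elementary obstruction theory to produce a $G$-map $S^{n+1}\to X$ when $X$ is $n$-connected and then composing with $f$. You instead give a self-contained proof via the Borel construction and the Serre spectral sequence of $X\to X_G\to BG$, which is essentially the classical equivariant-cohomology proof of Dold's theorem. Your reduction to $G=\Z/p$, the vanishing of rows $1\le q\le n$, the exclusion of differentials in and out of $E_r^{n+1,0}$, the injectivity of $\rho^\ast$ in degree $n+1$, and the factoring of $\rho$ through $S^n/G$ are all sound; this is more work than the paper's one-line reduction but has the virtue of being complete without appealing to a black-box sphere-to-sphere statement.

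One small inaccuracy worth fixing: you assert that $S^n/G$ is a lens space (for odd $p$) or a real projective space (for $p=2$). That is only true for linear free actions; an arbitrary free action of $\Z/p$ on $S^n$ can produce a ``fake'' lens space that is not homeomorphic to a classical one. What your argument actually needs is only that $S^n/G$ is a compact topological $n$-manifold (which holds because a free action of a finite group is properly discontinuous), so that $H^{n+1}(S^n/G;\mathbb{F}_p)=0$. That weaker and correct statement suffices, so the proof survives; just phrase it that way. Similarly, the identification of the map $Y\to BG$ with $\iota\colon S^n/G\to BG$ should be stated only up to homotopy, as a consequence of both maps classifying the same principal $G$-bundle over $Y$, which you do address correctly in the last paragraph.
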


Dold states this result as if there is a $G$-equivariant map $S^m \to S^n$ for free $G$-actions on domain and codomain, then $m \ge n$. This is equivalent to the version above: If $X$ is homotopically $n$-connected, then by elementary obstruction theory there is a $G$-equivariant map $S^{n+1} \to X$, which composed with~$f$ yields a $G$-equivariant map $S^{n+1} \to S^n$. Here we may assume that the action on $X$ is free, since $f$ maps to a free space.
Recall that a point $x\in X$ is a \emph{fixed point} of a $G$-action on~$X$ if $g\cdot x = x$ for all $g\in G$. Not having fixed points is generally a strictly weaker requirement than the freeness of a $G$-action. 

\begin{theorem}[Volovikov~\cite{volovikov1996}]
\label{thm:volov}
    Let $p\ge2$ be a prime, and let $G = (\Z/p)^k$ be an elementary abelian group that acts on the CW complex $X$ and without fixed points on~$S^n$. If there is a $G$-equivariant map $f\colon X \to S^n$ then $X$ is not $n$-connected.
\end{theorem}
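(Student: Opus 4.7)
The plan is to argue by contradiction and compare the Leray--Serre spectral sequences with $\mathbb{F}_p$-coefficients of the Borel-construction fibrations $X \to EG\times_G X \to BG$ and $S^n \to EG\times_G S^n \to BG$. Assuming $X$ is $n$-connected, a transgression class $\tau \in H^{n+1}(BG;\mathbb{F}_p)$ arising from the $G$-action on $S^n$ must simultaneously vanish, by naturality along the equivariant map $f$, and be nonzero, by the fixed-point-free hypothesis combined with Borel localization; this is the desired contradiction.

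First I would analyze the spectral sequence of $EG\times_G X\to BG$. Since $X$ is $n$-connected, $H^j(X;\mathbb{F}_p)=0$ for $1\le j\le n$, so on the $E_2$-page $E_2^{i,0}=H^i(BG;\mathbb{F}_p)$ and $E_2^{i,j}=0$ for $1\le j\le n$. Inspection of bidegrees shows that every differential $d_r$ into $E_r^{i,0}$ with $2\le r\le n+1$ originates in a zero entry while each $d_r$ out of $E_r^{i,0}$ lands at negative fiber degree; hence $E_\infty^{n+1,0}=H^{n+1}(BG;\mathbb{F}_p)$, and the edge map gives an injection $H^{n+1}(BG;\mathbb{F}_p)\hookrightarrow H^{n+1}(EG\times_G X;\mathbb{F}_p)$. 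The spectral sequence for $S^n$ has only rows $j=0$ and $j=n$ nonzero---the orientation action on $H^n(S^n;\mathbb{F}_p)$ is automatically trivial for $G=(\Z/p)^k$---and its unique possibly nontrivial differential, the transgression $d_{n+1}$, sends the fundamental class of the fiber to some $\tau\in H^{n+1}(BG;\mathbb{F}_p)$. In particular $\tau$ dies in $H^{n+1}(EG\times_G S^n;\mathbb{F}_p)$. By commutativity of $BG\leftarrow EG\times_G S^n\leftarrow EG\times_G X$ induced by $f$, the class $\tau$ also dies in $H^{n+1}(EG\times_G X;\mathbb{F}_p)$; combined with the above injectivity, $\tau=0$.

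To close the argument I would derive $\tau\ne 0$ from the fixed-point-free action on $S^n$ via Borel localization for elementary abelian $p$-groups: there is a multiplicatively closed subset $T\subset H^*(BG;\mathbb{F}_p)$, generated by Euler classes of the nontrivial one-dimensional $\mathbb{F}_p$-characters of $G$, such that for any $G$-CW complex $Y$ the restriction $T^{-1}H^*_G(Y;\mathbb{F}_p)\to T^{-1}H^*_G(Y^G;\mathbb{F}_p)$ is an isomorphism. Applied to $Y=S^n$ with $Y^G=\emptyset$ this gives $T^{-1}H^*_G(S^n;\mathbb{F}_p)=0$. But if $\tau=0$ the spectral sequence collapses at $E_2$, so $H^*_G(S^n;\mathbb{F}_p)\cong H^*(BG;\mathbb{F}_p)\otimes H^*(S^n;\mathbb{F}_p)$ is a free $H^*(BG;\mathbb{F}_p)$-module of rank $2$, and $1\in H^0_G(S^n;\mathbb{F}_p)$ is annihilated by no nonzero element of $T$, contradicting vanishing after localization. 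The main obstacle I anticipate is the localization step---one must pin down the correct set $T$ (polynomial generators of $H^*(BG;\mathbb{F}_p)$ in odd characteristic, linear forms for $p=2$) and ensure the freeness of $H^*_G(S^n;\mathbb{F}_p)$ genuinely obstructs localization---whereas the spectral-sequence comparison is routine once the filtrations are laid out.
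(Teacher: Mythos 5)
The paper does not prove Theorem~\ref{thm:volov}; it is cited as a known result of Volovikov and treated as a black box, so there is no in-paper argument to compare your proposal against. Evaluated on its own terms, your proposal is correct and is essentially the standard Borel-localization proof. The spectral-sequence bookkeeping is right: $n$-connectedness of $X$ forces $E_\infty^{n+1,0}\cong H^{n+1}(BG;\mathbb F_p)$ and hence injectivity of the edge map for the $X$-fibration, while the only possibly nonzero differential in the $S^n$-fibration is the transgression $d_{n+1}$ with value $\tau\in H^{n+1}(BG;\mathbb F_p)$, and naturality along $\mathrm{id}_{EG}\times_G f$ forces $\tau=0$. You are also right that the contradiction then rests entirely on the localization theorem for $G=(\Z/p)^k$, and you correctly identify this as the step where the elementary-abelian hypothesis is essential; for non-abelian or non-$p$-groups this argument has no analogue, which is precisely why Theorem~\ref{thm:volov} is stated for $(\Z/p)^k$ only, whereas Dold's Theorem~\ref{thm:dold} works for all finite groups but demands a free action.

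Two small points worth tightening if you write this up in full. First, for $n=0$ the fiber $S^0$ has $H^0(S^0;\mathbb F_p)\cong\mathbb F_p^2$ rather than the two-line picture you describe, so your spectral-sequence analysis implicitly assumes $n\ge 1$; the $n=0$ case should be handled separately (a connected $X$ admits no equivariant map to a fixed-point-free $G$-set $S^0$, by continuity). Second, in the localization step you do not actually need $H^*_G(S^n;\mathbb F_p)$ to be a free module of rank two; it suffices to observe that the (injective) edge map exhibits $H^*(BG;\mathbb F_p)$ as a submodule, so $s\cdot 1=s\ne 0$ for every $s\in T$, and hence $1/1\ne 0$ in $T^{-1}H^*_G(S^n;\mathbb F_p)$. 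This sidesteps any worry about splitting the module extension, and also makes it explicit that the applicability of the localization theorem (to a finite-dimensional, paracompact $G$-space with empty fixed set) is the only genuinely nontrivial input, as you anticipated.
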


\subsection{Carath\'eodory-type results}
\label{subsec:cara}

We refer to Barvinok~\cite{barvinok2002} and Matou\v sek~\cite{matousek2013} for the basics of convexity. We recall the classical theorems of Carath\'eodory and Steinitz:

\begin{theorem}[Carath\'eodory~\cite{caratheodory1907, caratheodory1911}]
\label{thm:carath}
    Let $X\subset \R^d$ be a subset, and let $x \in \conv X$. Then there are $x_1, \dots, x_{d+1} \in X$ with $x \in \conv \{x_1, \dots, x_{d+1}\}$.
\end{theorem}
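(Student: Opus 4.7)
The plan is to follow the standard affine-dependence reduction, showing that any convex representation of~$x$ using more than $d+1$ points can be shortened by one point, and then iterate until only $d+1$ remain. First I would unpack the definition of $\conv X$ to write $x = \sum_{i=1}^k \lambda_i x_i$ with $x_i \in X$, $\lambda_i \ge 0$, and $\sum_{i=1}^k \lambda_i = 1$, for some finite $k$. If $k \le d+1$ we are done; otherwise it suffices to show we can drop one of the~$x_i$ from the representation.

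Assume $k \ge d+2$. The key observation is that the $k-1 \ge d+1$ vectors $x_2 - x_1, x_3 - x_1, \dots, x_k - x_1$ lie in $\R^d$ and so are linearly dependent. Unwinding such a linear dependence produces real numbers $\mu_1, \dots, \mu_k$, not all zero, with
\[
\sum_{i=1}^k \mu_i x_i = 0 \qquad \text{and} \qquad \sum_{i=1}^k \mu_i = 0.
\]
Since $\sum \mu_i = 0$ and the $\mu_i$ are not all zero, at least one $\mu_i$ is strictly positive.

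Now I would exploit this affine dependence to perturb the convex combination. For any $t \in \R$, the identity
\[
x \;=\; \sum_{i=1}^k (\lambda_i - t\mu_i)\, x_i
\]
holds, with coefficients summing to~$1$. Choosing $t = \min\{\lambda_i/\mu_i \mid \mu_i > 0\}$ makes all $\lambda_i - t\mu_i$ non-negative while forcing at least one of them to vanish. This expresses $x$ as a convex combination of at most $k-1$ points of~$X$. Iterating the reduction until $k \le d+1$ produces the desired representation.

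The argument is short and its only potentially delicate moment is ensuring that the perturbation parameter $t$ is chosen so that non-negativity is preserved and the number of active points strictly drops; this is exactly why $t$ is taken as the \emph{minimum} of $\lambda_i/\mu_i$ over indices with $\mu_i > 0$, and why the existence of a positive $\mu_i$ (guaranteed by $\sum \mu_i = 0$ and the $\mu_i$ not all zero) matters. No topological machinery is needed for this classical statement, in contrast with the Carath\'eodory-type results for convex curves later in the paper, where convexity of the hull is replaced by equivariance and the reduction to $d+1$ points uses Borsuk--Ulam-type input.
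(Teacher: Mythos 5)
Your proof is correct and is the standard affine-dependence reduction argument for Carath\'eodory's theorem. Note that the paper does not prove this statement at all: it is cited as a classical result of Carath\'eodory and used as a black box (e.g.\ in the proofs of Proposition~\ref{prop:sign-change} and Lemma~\ref{lem:cstm}), so there is no in-paper argument to compare against. Your write-up supplies the missing classical proof, and the logic is sound: from $k\ge d+2$ points one extracts an affine dependence $\sum\mu_i x_i=0$, $\sum\mu_i=0$, with some $\mu_i>0$; the choice $t=\min\{\lambda_i/\mu_i\mid\mu_i>0\}\ge0$ keeps all new coefficients $\lambda_i-t\mu_i$ non-negative (for $\mu_i\le0$ because $t\ge0$, for $\mu_i>0$ by minimality) while killing at least one, and iterating terminates at $k\le d+1$. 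Your closing remark is also apt: this elementary linear-algebraic argument is genuinely different in spirit from the equivariant-topological route the paper uses for its Carath\'eodory-type result on convex closed curves (Theorem~\ref{thm:convex}), where the sharper bound of $d+1$ points in $\R^{2d}$ cannot be obtained by affine-dependence pruning alone and instead comes from a Borsuk--Ulam-type theorem applied to a join construction.
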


For $X \subset \R^d$ the \emph{Carath\'eodory number} is the smallest integer~$k$ such that any $x\in \conv X$ is a convex combination of at most $k$ points in~$X$.

\begin{theorem}[Steinitz~\cite{steinitz1913}]
\label{thm:steinitz}
    Let $X\subset \R^d$ be a subset, and let $x$ be in the interior of~$\conv X$. Then there are $x_1, \dots, x_{2d} \in X$ such that $x$ is in the interior of~$\conv \{x_1, \dots, x_{2d}\}$.
\end{theorem}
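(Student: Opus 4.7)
The plan is to combine Carath\'eodory's theorem (Theorem~\ref{thm:carath}) with an exchange/minimality argument to extract a set of at most $2d$ points of $X$ whose convex hull contains $x$ in its interior. First, translate so that $x = 0$, and pick $\epsilon > 0$ with $B(0,\epsilon) \subset \conv X$. Applying Carath\'eodory to each of the $2d$ points $\pm \epsilon e_i$ (where $e_1, \ldots, e_d$ is the standard basis of $\R^d$) yields a finite subset $X_0 \subset X$ of cardinality at most $2d(d+1)$ with $B(0,\epsilon) \subset \conv X_0$. So without loss of generality, $X$ is finite.

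Next, among all finite subsets $X' \subset X$ satisfying $0 \in \mathrm{int}\,\conv X'$, choose one $X' = \{y_1, \ldots, y_n\}$ of minimum cardinality. It then suffices to show $n \le 2d$. The interior condition is equivalent to $\{y_1, \ldots, y_n\}$ \emph{positively spanning} $\R^d$, namely that for every nonzero $v \in \R^d$ there is some index $i$ with $\langle v, y_i\rangle > 0$. Minimality forces that for each $i$ there exists a witness $v_i \in \R^d \setminus \{0\}$ with $\langle v_i, y_j\rangle \le 0$ for every $j \ne i$, and hence (since the full set still positively spans) $\langle v_i, y_i\rangle > 0$.

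Organize these dualities into the $n \times n$ matrix $M$ with $M_{ij} = \langle v_i, y_j\rangle$. By construction $M$ has strictly positive diagonal and nonpositive off-diagonal, and it factors as $M = V^\top Y$ for the $d\times n$ matrices $V = (v_1 \mid \cdots \mid v_n)$ and $Y = (y_1 \mid \cdots \mid y_n)$, so $\mathrm{rank}(M) \le d$. The combinatorial heart of the proof is to deduce $n \le 2d$ from this sign-and-rank structure; this is the main obstacle, equivalent to the classical Bonnice--Klee--Reay bound that a minimal positive spanning set in $\R^d$ has at most $2d$ elements. I would attempt this either by an induction on $d$, projecting $X$ onto a hyperplane through the origin to invoke a $(d-1)$-dimensional instance and then adjoining two points of $X$ on opposite sides of the hyperplane to restore full-dimensional interior containment, or by exploiting the strictly positive linear dependence $\sum_i \lambda_i y_i = 0$ (supplied by positive spanning) together with the sign pattern of $M$ to pair indices off. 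Either route closes the argument and produces $\{y_1, \ldots, y_n\} \subset X$ with $n \le 2d$ and $x \in \mathrm{int}\,\conv\{y_1, \ldots, y_n\}$.
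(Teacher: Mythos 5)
The paper does not prove Theorem~\ref{thm:steinitz}; it is stated as a classical result with a citation to Steinitz~\cite{steinitz1913} and used as a black box in the proof of Lemma~\ref{lem:Zp}. So there is no in-paper proof to compare against, and I will assess your proposal on its own.

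Your preliminary reductions are sound. The Carath\'eodory step correctly reduces to the finite case (a small imprecision: $\conv X_0$ contains $\conv\{\pm\epsilon e_i\}$, which is the cross-polytope and hence contains $B(0,\epsilon/\sqrt{d})$ but not necessarily $B(0,\epsilon)$; this is harmless since all you need is $0\in\mathrm{int}\,\conv X_0$). The passage to a minimal finite positively spanning subset, the dual characterization of interior containment, the witnesses $v_i$ forced by minimality, and the factorization $M=V^\top Y$ with positive diagonal, nonpositive off-diagonal, and rank at most $d$ are all correct and are the standard setup.

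The genuine gap is that the only load-bearing step is left unproved. You write ``I would attempt this either by\ldots'' and describe two possible routes without carrying out either one; but the implication from the sign-and-rank structure of $M$ (equivalently, from minimality of a positive spanning set) to the bound $n\le 2d$ \emph{is} Steinitz's theorem, and everything before it is bookkeeping. Moreover, your route (a) as sketched does not obviously close: after projecting onto a hyperplane $H$ through the origin and selecting $\le 2(d-1)$ points whose projections positively span $H$, adjoining two points $y_j,y_k$ with $\langle\nu,y_j\rangle>0>\langle\nu,y_k\rangle$ (with $\nu$ normal to $H$) does not ensure the enlarged set positively spans $\R^d$, because for a direction $v=v_H+c\nu$ the term $c\langle\nu,y_j\rangle$ can be swamped by a large negative $\langle v_H,y_j\rangle$; the choice of adjoined points would have to depend on $v$, which defeats the purpose. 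Route (b) (combining the strictly positive dependence $\sum_i\lambda_i y_i=0$ with the sign pattern of $M$) is closer to the classical argument of Davis/Steinitz, but it is precisely there that the nontrivial combinatorics lives, and you have not supplied it. As written, the proposal is an accurate plan but not a proof.
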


We point out that the papers of Carath\'eodory and Steinitz are foundational papers in convexity, and properties of convex sets were established for the purpose of studying trigonometric polynomials and Fourier series. To this end the \emph{trigonometric moment curve} $\gamma(t) = (\sin t, \cos t, \dots, \sin dt, \cos dt)$ appears in Carath\'eodory's work; the convex hull of the image of~$\gamma$ is now referred to as \emph{Carath\'eodory orbitope}~\cite{sanyal2011}. More generally in~\cite{sanyal2011} Sanyal, Sottile, and Sturmfels study \emph{orbitopes}, that is, convex hulls of orbits of actions of compact Lie groups on Euclidean space. Longinetti, Sgheri, and Sottile~\cite{longinetti2010} determine bounds for the Carath\'eodory number of the convex hull of orbits of a certain $SO(3)$-action on~$\R^{10}$. Instead of studying the convex hull of the orbit of a single representation, that is, of a $G$-equivariant map $G \to \R^d$, our work more generally bounds Carath\'eodory numbers for convex hulls of the images of equivariant maps; see~\cite{adams2020}, where the connection between Borsuk--Ulam-type theorems and Carath\'eodory numbers of orbitopes exploited in the present paper is made.

A curve $\gamma \colon [0,1] \to \R^d$ is called \emph{convex} if every affine hyperplane in~$\R^d$ intersects $\gamma$ in at most $d$ points, that is, if the equation $\langle a, \gamma(x) \rangle = b$ has at most $d$ solutions for fixed $a \in \R^d \setminus \{0\}$ and~$b\in \R$. It is easy to see that closed convex curves, that is, those with $\gamma(0) = \gamma(1)$, only exist for even~$d$. For the following result see for example~{\cite[Prop.~II.9.4]{barvinok2002}}; we will give a topological proof, see Section~\ref{sec:cara}.

\begin{theorem}[Carath\'eodory~\cite{caratheodory1911}]
\label{thm:conv-curve}
    Let $\gamma \colon S^1 \to \R^{2d}$ be a closed convex curve, and let $y \in \conv \{\gamma(x)  \mid  x \in S^1\}$. Then there are $x_1, \dots, x_{d+1} \in S^1$ with $y \in \conv \{\gamma(x_1), \dots, \gamma(x_{d+1})\}$.
\end{theorem}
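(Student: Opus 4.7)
The plan is to argue by contradiction and use an equivariant-topological obstruction in the spirit of the metric-thickenings framework advertised in the introduction. After translating, assume $y = 0$, so that $0 \in \conv\{\gamma(x) \mid x\in S^1\}$. Suppose for contradiction that $0 \notin \conv\{\gamma(x_1),\dots,\gamma(x_{d+1})\}$ for every choice of points $x_1,\dots,x_{d+1}\in S^1$.

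The first step is to promote $\gamma$ to a curve that carries a free group action. Using Lemma~\ref{lem:Zp}, uniformly approximate $\gamma$ by a $\Z/p$-equivariant closed convex curve $\tilde\gamma\colon S^1\to\R^{2d}$ for a prime $p>d+1$, where the action of $\Z/p$ on $\R^{2d}$ (by homeomorphisms preserving rays) has $0$ as its only fixed point. Non-containment of $0$ in the convex hull of any $(d+1)$-tuple of curve points is an open condition, so the contradiction hypothesis survives for $\tilde\gamma$.

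Next, build the space on which the equivariant argument takes place. Let $X$ be the space of Borel probability measures on $S^1$ supported on at most $d+1$ points, topologized by the Wasserstein metric of optimal transport (this is the metric thickening appearing in the general framework). The rotation action of $\Z/p$ on $S^1$ lifts to $X$, and for the chosen prime $p>d+1$ the action is free: any invariant measure would have support containing a full $\Z/p$-orbit of size $p$, which is too large. The barycenter
\[
   \beta\colon X \longrightarrow \R^{2d}, \qquad \beta(\mu) = \int_{S^1} \tilde\gamma\, d\mu,
\]
is continuous and $\Z/p$-equivariant, and by hypothesis avoids $0$. Composing with radial projection yields a $\Z/p$-equivariant map $\bar\beta\colon X \to S^{2d-1}$, with $\Z/p$ acting freely on the sphere.

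The plan is now to invoke Dold's theorem (Theorem~\ref{thm:dold}) to conclude that $X$ cannot be $(2d-1)$-connected, and then to contradict this by establishing the connectivity directly. The hardest part is the connectivity statement: the naive candidate $X$ is not highly connected as written (already for $d=1$ the symmetric-product heuristic gives only $S^1$), so one must either restrict to the subspace of measures supported on small-diameter sets—leveraging the convex curve condition that forces the $d+1$ extremal points to cluster within an arc of length $\tfrac{d}{d+1}$—or identify the homotopy type of the relevant thickening via the techniques of Section~\ref{sec:approach}. Once the $(2d-1)$-connectivity of the correct thickening is established, Dold's theorem applied to $\bar\beta$ produces the contradiction, and taking a limit over primes $p$ (with the approximations $\tilde\gamma\to\gamma$) yields the $(d+1)$-point convex decomposition of $y$ on the original curve.
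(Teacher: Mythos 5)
You have the right ingredients---translate so $y = 0$, approximate the convex curve by a $\Z/p$-equivariant one via Lemma~\ref{lem:Zp}, and argue via Dold's theorem that the equivariant barycenter map must hit the origin---and you correctly identify where the argument breaks down. But the gap you flag is genuine and you do not close it, so the proof is incomplete. The space $X$ of probability measures on $S^1$ supported on at most $d+1$ points is not established (and is not obviously) $(2d-1)$-connected, and the connectivity result you would want to lean on (Theorem~\ref{thm:vr}) concerns measures supported on small-diameter sets, which is not what you have; restricting to small-diameter supports is also not available here, since you have not yet produced the $d+1$ extremal points that would be forced to cluster. So the ``identify the homotopy type of the relevant thickening'' step is where the argument dies.

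The fix, which is what the paper does, is to bypass the measure space entirely and work with the join $(S^1)^{*(d+1)}$ as the configuration space. Define $F\colon (S^1)^{*(d+1)} \to \R^{2d}$ by $\lambda_1x_1 \oplus \dots \oplus \lambda_{d+1}x_{d+1} \mapsto \sum_i \lambda_i\,\tilde\gamma(x_i)$. This is $\Z/p$-equivariant for the diagonal action on the join (using that the $\Z/p$-action on $\R^{2d}$ respects rays and hence commutes with convex combinations). The join $(S^1)^{*(d+1)}$ is homeomorphic to $S^{2d+1}$, so it is $2d$-connected with no further argument needed; if $F$ were nowhere zero, radial projection would give a $\Z/p$-equivariant map $S^{2d+1}\to S^{2d-1}$, contradicting Dold. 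A zero of $F$ gives $d+1$ points on $\tilde\gamma$ whose convex hull contains $0$, and letting the approximation parameter $\varepsilon\to 0$ together with compactness of $S^1$ transfers this back to $\gamma$. Two smaller points worth noting: Lemma~\ref{lem:Zp} requires the origin to be in the \emph{interior} of $\conv(\gamma(S^1))$, so boundary points $y$ must be handled by a separate approximation (as the paper remarks just before its Theorem~\ref{thm:convex}); and your openness argument does work but needs the uniform form---by compactness of the space of $(d+1)$-tuples, $\mathrm{dist}(0,\conv\{\gamma(x_1),\dots,\gamma(x_{d+1})\})$ is bounded below by some $\delta>0$, so a $(\delta/2)$-approximation preserves the contradiction hypothesis.
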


Carath\'eodory proves this result for $\gamma = \gamma_d$, the trigonometric moment curve, and mentions without proof that any closed convex curve may be transformed into~$\gamma_d$ by a similarity. More is true: A convex curve in~$\R^d$ is a normal curve, that is, a $1$-dimensional variety of degree~$d$; extending~$\R^d$ to $d$-dimensional projective space, normal curves are projectively unique~{\cite[Prop.~18.9]{harris2013algebraic}}. Let $\gamma\colon I \to \R^d$ be a convex curve. Then the coordinate functions of~$\gamma$ together with a constant non-zero function span a Chebyshev space (and every Chebyshev space is of this form). A central fact about Chebyshev spaces is that the coordinate functions of $\gamma$ together with constant functions span a Chebyshev space if and only if for all $x_1 < \dots < x_d \in I$ the determinants $\det(\gamma(x_1) \ \gamma(x_2) \ \cdots \ \gamma(x_d))$ are all positive or all negative; see~{\cite[Ch.~1, \S~4]{karlin}}. In convex-geometric terms, this can be phrased as the points $\gamma(x_1), \dots, \gamma(x_d)$ are in cyclic position~\cite{barany2014, sturmfels1987cyclic}.

The hyperplane separation theorem (or finite-dimensional Hahn--Banach theorem) states that for any two disjoint convex closed sets $C, D\subset \R^d$, where at least one of them is compact, there is an affine hyperplane that strictly separates them, that is, there is an $a\in \R^d$ and $b \in \R$ such that $\langle a,x\rangle < b < \langle a,y \rangle$ for all $x \in C$ and all $y\in D$; see~\cite[Thm.~1.2.4]{matousek2013}.

\begin{lemma}
\label{lem:hahn-banach}
    Let $V$ be a vector space of continuous maps $f \colon X \to \R$, where $X$ is compact, and let $f_1, \dots, f_d$ be a basis of~$V$. Let $\gamma\colon X \to \R^d$ be given by $\gamma(x) = (f_1(x), \dots, f_d(x))$, and let $A\subset X$ be closed. Then $0 \in \conv(\gamma(A))$ if and only if every $f \in V$ changes sign on~$A$.
\end{lemma}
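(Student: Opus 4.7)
The plan is to prove both directions by a direct translation between convex-geometric and functional statements, using that every $f \in V$ has the form $f(x) = \langle a, \gamma(x) \rangle$ for some $a \in \R^d$ (the coefficient vector in the basis $f_1, \dots, f_d$).

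For the ``only if'' direction I would suppose $0 \in \conv(\gamma(A))$ and write $0 = \sum_{j=1}^m \lambda_j \gamma(x_j)$ with $x_j \in A$, $\lambda_j \ge 0$, and $\sum \lambda_j = 1$ (this is possible by Carath\'eodory's theorem, but really only the convex combination is needed). For any $f \in V$, taking the inner product of both sides with the corresponding $a$ gives $0 = \sum_j \lambda_j f(x_j)$. Since the $\lambda_j$ are nonnegative with positive sum, not all $f(x_j)$ can be strictly positive and not all can be strictly negative; hence there exist $x_j, x_k \in A$ with $f(x_j) \ge 0$ and $f(x_k) \le 0$, which is precisely the definition of $f$ changing sign on~$A$.

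For the ``if'' direction I would argue the contrapositive. Suppose $0 \notin \conv(\gamma(A))$. Since $X$ is compact and $A$ is closed, $A$ is compact, so $\gamma(A) \subset \R^d$ is compact, and therefore $\conv(\gamma(A))$ is a compact convex subset of $\R^d$ (convex hull of a compact set in finite dimension). The singleton $\{0\}$ and $\conv(\gamma(A))$ are then two disjoint compact convex sets, so by the hyperplane separation theorem there exist $a \in \R^d$ and $b \in \R$ with $\langle a, 0 \rangle < b < \langle a, y \rangle$ for all $y \in \conv(\gamma(A))$. Setting $f = \sum_i a_i f_i \in V$, we obtain $f(x) = \langle a, \gamma(x) \rangle > b > 0$ for every $x \in A$, so $f$ does not change sign on~$A$.

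There is no real obstacle here; the only point that might require a line of justification is that $\conv(\gamma(A))$ is closed, which is exactly why I would invoke compactness of $A$ (and hence of $\gamma(A)$) before applying strict hyperplane separation. Everything else is a rephrasing of linear functionals as inner products against the coordinate representation~$\gamma$.
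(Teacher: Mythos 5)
Your proof is correct and takes essentially the same route as the paper: the paper also proves the contrapositive of the ``if'' direction by separating $0$ from the compact convex set $\conv(\gamma(A))$ with a hyperplane and translating the separating functional into an element $f_a \in V$, and the ``only if'' direction is the elementary observation that a vanishing nonnegative combination of the values $f(x_j)$ forces a sign change. You spell out both directions and the compactness justification more explicitly, but there is no substantive difference in method.
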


\begin{proof}
    We have that $0 \notin \conv(\gamma(A))$ if and only if there is an $a \in \R^{2d}$ such that $\langle a, \gamma(x) \rangle > 0$ for all $x \in A$. Since $f_a(x) = \langle a, \gamma(x) \rangle = \sum_i a_if_i(x)$ is a linear combination of the $f_i$ this is equivalent to $f_a$ not changing its sign on~$A$.
\end{proof}

\subsection{Integration by finite evaluation}
\label{subsec:int}

Carath\'eodory-type results are closely related to numerical integration by averaging over a finite sample. A standard example for this is the Richter--Tchakaloff theorem below~\cite{tchakaloff1957formules, schmudgen2020ten}, which follows from Carath\'eodory's theorem.

\begin{theorem}[Richter--Tchakaloff]
\label{thm:tchakaloff}
    Let $(X, \mu)$ be a probability space, and let $V \subset L^1(X, \mu)$ be a $k$-dimensional subspace. Then there is a set $A\subset X$ of cardinality~$k$ and a function $w\colon A \to [0,1]$ such that
    \[
        \int_X f \ d\mu = \sum_{a \in A} w(a)f(a) \ \text{for all} \ f \in V.
    \]
\end{theorem}

A strengthening of this for univariate polynomials reducing the size of the set~$A$ is Gau{\ss}--Legendre quadrature, that is, the following double-precision integration formula; see~{\cite[Ch.~II, 10]{barvinok2002}}.

\begin{theorem}[Gau{\ss}--Legendre]
\label{thm:double-precision}
    Let $\rho\colon [0,1] \to [0, \infty)$ be a continuous density function. For any positive integer~$m$ there are points $\tau_1^*, \dots, \tau_m^* \in [0,1]$ and $\lambda_1, \dots, \lambda_m > 0$ such that 
    \[
        \int_0^1 f(t)\rho(t) \ dt = \sum_{i=1}^m \lambda_if(\tau_i^*)
    \]
    for any polynomial of degree at most $2m-1$.
\end{theorem}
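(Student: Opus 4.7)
The plan is to recover the classical Gaussian quadrature construction. The key point is that a well-chosen set of $m$ nodes can exactly reproduce the integral of any polynomial of degree up to $2m-1$, because such an $f$ decomposes as $f = p_m \cdot q + r$ where $p_m$ is a suitably chosen orthogonal polynomial of degree $m$; if the nodes are the roots of $p_m$, then the $p_m q$ piece vanishes at the nodes, while orthogonality kills its integral against $\rho$. Throughout I would assume $\rho \not\equiv 0$, since otherwise strictly positive weights cannot reproduce the zero integral.

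First I would define the inner product $\langle f, g \rangle_\rho = \int_0^1 f(t) g(t) \rho(t)\, dt$ on polynomials; since $\rho$ is continuous, non-negative, and not identically zero, it is strictly positive on some subinterval and the pairing is a genuine inner product. Applying Gram--Schmidt to $1, t, t^2, \ldots$ produces orthogonal polynomials $p_0, p_1, \ldots$ with $\deg p_k = k$. A standard argument then shows $p_m$ has $m$ distinct real roots $\tau_1^*, \ldots, \tau_m^*$ in $(0,1)$: if $p_m$ changed sign at only $r < m$ points of $(0,1)$, then $p_m(t) \cdot \prod_{i=1}^r (t - \tau_i^*)$ would have constant sign and nonzero $\rho$-integral, contradicting the orthogonality of $p_m$ to the polynomial $\prod_{i=1}^r (t - \tau_i^*)$ of degree less than $m$. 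I would then determine $\lambda_1, \ldots, \lambda_m$ by requiring $\int_0^1 r(t)\rho(t)\, dt = \sum_{i=1}^m \lambda_i r(\tau_i^*)$ for every polynomial $r$ of degree at most $m-1$; this is a non-degenerate Vandermonde system at the distinct nodes $\tau_i^*$, so unique real $\lambda_i$ exist.

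To verify the quadrature identity, given $f$ of degree at most $2m-1$ I would use polynomial division $f = p_m \cdot q + r$ with $\deg q, \deg r \le m-1$; then $\int_0^1 f\rho\, dt = \int_0^1 r\rho\, dt$ by orthogonality, while $\sum \lambda_i f(\tau_i^*) = \sum \lambda_i r(\tau_i^*)$ since $p_m(\tau_i^*) = 0$, and these two quantities agree by the defining relation for $\lambda_i$. Positivity of each $\lambda_i$ follows from plugging in the test polynomial $f_i(t) = \prod_{j \ne i}(t - \tau_j^*)^2$, which is non-negative of degree $2m-2$, strictly positive at $\tau_i^*$, and vanishes at every other node, giving $\lambda_i f_i(\tau_i^*) = \int_0^1 f_i \rho\, dt > 0$. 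The main obstacle is the claim that $p_m$ has $m$ simple roots strictly inside $(0,1)$; once this is in hand, the rest is direct linear algebra. A more Carath\'eodory-flavored alternative would view $v = \int_0^1 \gamma(t) \rho(t)\, dt$ for $\gamma(t) = (1, t, \ldots, t^{2m-1})$ as a point in the positive cone over the moment curve in $\R^{2m}$ and try to realize it via only $m$ curve points using the face structure of cyclic polytopes, but that route is substantially more intricate than the orthogonal polynomial argument.
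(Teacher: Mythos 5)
Your proof is correct, and it takes a genuinely different route from what the paper indicates. The paper does not prove Theorem~\ref{thm:double-precision} directly; it cites Barvinok~\cite[Ch.~II, 10]{barvinok2002} and remarks that the statement ``follows directly from the fact that every point in the convex hull of the moment curve $(t,t^2,\dots,t^{2m-1})$ is in the convex hull of $m$ points along this curve.'' That is the Carath\'eodory-flavored route you sketch and set aside at the end of your proposal: one forms $v = \int_0^1 (1,t,\dots,t^{2m-1})\,\rho(t)\,dt$, observes that $v$ lies in the convex cone over the moment curve, and invokes the sharp Carath\'eodory bound for the odd moment curve (governed by the face structure of cyclic polytopes) to represent $v$ as a nonnegative combination of $m$ curve points, which unwinds to the quadrature identity. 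What you actually carry out is the classical Gaussian quadrature argument via $\rho$-orthogonal polynomials: build $p_m$ by Gram--Schmidt, show its $m$ roots are simple and interior by the sign-change argument, solve the Vandermonde system for the weights, split $f=p_m q+r$, and certify positivity of the weights with the test polynomials $\prod_{j\ne i}(t-\tau_j^*)^2$. All of these steps are sound, including your preliminary observation that continuity and $\rho\not\equiv 0$ make the pairing a genuine inner product on polynomials and that all integrals of nonnegative nonzero polynomials against $\rho$ are strictly positive.

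The trade-off between the two routes is as you suspect. The orthogonal-polynomial proof is elementary and self-contained, and it gives a concrete recipe (nodes are roots of $p_m$, weights come from a Vandermonde solve) together with a direct positivity certificate. The Carath\'eodory route is less elementary at the level of this single statement, since it rests on the exact Carath\'eodory number of the moment curve, but it is the one that fits the paper's broader program: it generalizes readily from the moment curve to arbitrary convex or equivariant curves, which is precisely what Theorems~\ref{thm:convex}, \ref{thm:equivariant}, and \ref{thm:int-eq} exploit. Your proof cannot be pushed in that direction because there is no analogue of orthogonal polynomials with an interlacing root structure once one leaves the setting of ordinary polynomials on an interval. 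For this particular theorem, though, what you wrote is a complete and correct proof, arguably cleaner than reconstructing the cyclic-polytope argument from scratch.
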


Here the name ``double-precision'' refers to the phenomenon that a sample of $m$ points is sufficient to integrate polynomials of degree up to~$2m-1$, a vector space of dimension~$2m$. This follows directly from the fact that every point in the convex hull of the moment curve $(t,t^2, \dots, t^{2m-1})$ is in the convex hull of $m$ points along this curve.
Similarly, Theorem~\ref{thm:int-eq} gives criteria under which the size of the set~$A$ in the Richter--Tchakaloff theorem may be reduced. It asserts the existence of an integration formula with $A$ of size roughly~$\frac{k}{c+2}$, where $c$ is the connectivity of the domain, provided that the zero mean part of~$V$ has a basis $(f_1, \dots, f_{k-1})$ that together form a $G$-equivariant map to~$\R^{k-1}$, where the $G$-action on~$\R^{k-1}$ is free away from the origin. 

For a measure space $X$ with measure~$\mu$ of finite total measure and $V$ a finite-dimensional vector space of integrable functions $X\to \R$, an \emph{averaging set} is a finite set $A \subset X$ such that $\frac{1}{\mu(X)}\int_X f \ d\mu = \frac{1}{|A|}\sum_{a\in A} f(a)$ for all $f\in V$. Seymour and Zaslavsky show the existence of averaging sets in broad generality by relating it to Carath\'eodory-type results.
By Lemma~\ref{lem:hahn-banach}, we can moreover relate integration by finite evaluation results to results about functions changing their sign on the given point sample:

\begin{proposition}
\label{prop:sign-change}
    Let $X$ be a metric measure space of total measure~$1$, and let $V$ be a finite-dimensional vector space of continuous functions $X \to \R$. Suppose that all constant functions are in~$V$. Let $A \subset X$ be a closed set. Then every $f \in V$ with zero mean changes sign on~$A$ if and only if there is a function $w\colon A \to [0,1]$ with finite support such that for every $g \in V$ we have that
    \[
        \int_X g(x) \ dx = \sum_{a \in A} w(a)g(a).
    \]
\end{proposition}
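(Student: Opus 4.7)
The plan is to reduce the proposition to Lemma~\ref{lem:hahn-banach} by applying Carath\'eodory's theorem to the image of the zero mean basis. Since $V$ contains the constants, decompose $V = V_0 \oplus \R$, where $V_0$ is the subspace of functions with zero mean. Pick a basis $f_1, \dots, f_d$ of $V_0$ and define $\gamma \colon X \to \R^d$ by $\gamma(x) = (f_1(x), \dots, f_d(x))$, so that Lemma~\ref{lem:hahn-banach} characterizes when every $f \in V_0$ changes sign on $A$ as the condition $0 \in \conv(\gamma(A))$.

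For the forward direction, assume every zero mean $f \in V$ changes sign on $A$. Then $0 \in \conv(\gamma(A))$, so by Theorem~\ref{thm:carath} there exist $a_1, \dots, a_k \in A$ and coefficients $\lambda_1, \dots, \lambda_k \ge 0$ with $\sum_i \lambda_i = 1$ and $\sum_i \lambda_i \gamma(a_i) = 0$. Define $w(a_i) = \lambda_i$ and $w \equiv 0$ elsewhere; then $w \colon A \to [0,1]$ has finite support. By construction $\sum_{a \in A} w(a) f_j(a) = 0 = \int_X f_j\, dx$ for every basis element $f_j$ of $V_0$, and $\sum_{a \in A} w(a) \cdot 1 = 1 = \int_X 1\, dx$. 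Since every $g \in V$ is an $\R$-linear combination of the $f_j$ and the constant~$1$, the integration identity extends by linearity to all of $V$.

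For the reverse direction, suppose such a $w$ exists. Substituting $g \equiv 1$ yields $\sum_{a} w(a) = 1$, so $w \not\equiv 0$. Now let $f \in V$ have zero mean and suppose toward a contradiction that $f$ does not change sign on $A$; then either $f(a) > 0$ for all $a \in A$ or $f(a) < 0$ for all $a \in A$ (because the presence of a zero of $f$ on $A$ would itself witness a sign change by the given definition). Without loss of generality $f > 0$ on $A$, hence on the finite support of $w$, so $\sum_{a} w(a) f(a) > 0$, contradicting $\sum_{a} w(a) f(a) = \int_X f\, dx = 0$.

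The argument is essentially an unpacking of Lemma~\ref{lem:hahn-banach} plus Carath\'eodory, so no real obstacle is expected; the only point requiring care is the logical translation of ``does not change sign,'' where one must observe that the convention $f(x) \ge 0$ and $f(y) \le 0$ makes any zero of $f$ on $A$ a sign change, so the negation forces strict positivity or strict negativity on $A$.
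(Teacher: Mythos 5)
Your proof is correct and follows essentially the same route as the paper's: both directions reduce to Lemma~\ref{lem:hahn-banach} and Carath\'eodory's theorem via the curve $\gamma$ built from a basis of the zero mean subspace, and the reverse direction uses nonnegativity of $w$ to force a sign change. The only cosmetic difference is that you verify the integration identity on the basis of $V_0$ and on the constant $1$ and then invoke linearity, whereas the paper expands a general $g = c + \sum a_i f_i$ directly; the content is identical.
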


\begin{proof}
    First suppose that every $f\in V$ with $\int_X f(x) \ dx = 0$ changes sign on~$A$. Let $f_1, \dots, f_n$ be a basis of the codimension one subspace of $V$ that contains all zero mean functions. Let 
    \[
        \gamma\colon X \to \R^n, \ \gamma(x) = (f_1(x), \dots, f_n(x)). 
    \]
    By Lemma~\ref{lem:hahn-banach} we have that $0 \in \conv(\gamma(A))$. By Theorem~\ref{thm:carath} there are $x_1, \dots, x_{n+1} \in A$ and $\lambda_1, \dots, \lambda_{n+1} \ge 0$ with $\sum_i \lambda_i=1$ such that $0 = \sum_i \lambda_i\gamma(x_i)$. Let $g\in V$ be arbitrary. Then there are coefficients $a_1, \dots, a_n \in \R$ and a constant $c\in \R$ such that $g = c+\sum_i a_if_i$. Then
    \[\sum_i \lambda_ig(x_i) = \sum_i \lambda_i\left(c+\sum_j a_j f_j(x_i)\right) = c \sum_i \lambda_i = c = c+ \sum_i a_i \int_X f_i(x) \ dx = \int_X g(x) \ dx.\]
    Setting $w(x_i) = \lambda_i$ and $w(a) = 0$ for $a \in A \setminus \{x_1, \dots, x_{n+1}\}$ proves one direction of the proposition. (Here we assume for simplicity of notation that the $x_i$ are pairwise distinct.)

    Conversely, let $w\colon A \to [0,1]$ be a function with $\int_X g(x) \ dx = \sum_{a\in A} w(a)g(a)$ for all $g\in V$. Since $V$ contains non-zero constant functions, $w$ cannot be identically zero. Given any $f\in V$ with zero mean, we have that $0 = \int_X f(x) \ dx = \sum_{a\in A} w(a)f(a)$. Since $w(a) \ge 0$ for all $a \in A$, the function $f$ must change sign on~$A$.
\end{proof}

For $X = S^d \subset \R^{d+1}$ and $V$ the vector space of real polynomials in $d+1$ variables of total degree at most~$t$, an averaging set $A\subset S^d$ is called \emph{spherical $t$-design}. Asymptotically optimal bounds for the size of $t$-designs were proven by Bondarenko, Radchenko, and Viazovska~\cite{bondarenko2013} using topological techniques.

\subsection{Metric thickenings}

Let $(X,d)$ be a compact metric space. Let $\mathcal F$ be a set of subsets of~$X$. Denote by $\mathcal P(X, \mathcal F)$ the space of Borel probability measures that are supported in some $A \in \mathcal F$ equipped with the $1$-Wasserstein metric of optimal transport. The \emph{$1$-Wasserstein distance} between Borel probability measures $\mu$ and $\nu$ on~$X$ is the infimum of $\int_{X\times X} d(x,y) \ d\pi$ over all Borel probability measures~$\pi$ on $X\times X$ with $\mu(B) = \pi(B \times X)$ and $\nu(B) = \pi(X \times B)$ for all Borel sets $B \subset X$. We will not make reference to metric properties of~$\mathcal P(X, \mathcal F)$; the choice of $1$-Wasserstein metric is thus of no consequence, since any $q$-Wasserstein metric induces the same topology. We call $\mathcal P(X, \mathcal F)$ the \emph{metric thickening} by $\mathcal F$ of~$X$. We refer to~\cite{adamaszek2018, adams2024} for the basics. There the definition is restricted to sets $\mathcal F$ that only consist of finite sets, and thus $\mathcal P(X, \mathcal F)$ only contains convex combinations of \emph{Dirac measures}~$\delta_x$ that assign measure one exactly to those measurable sets that contain~$x$. The more general definition here will simplify the presentation somewhat.

A \emph{simplicial complex} $\Sigma$ is a set of finite sets closed under taking subsets, that is, for $\sigma \in \Sigma$ and $\tau \subset \sigma$ we have that $\tau \in \Sigma$. Singletons $\{v\} \in \Sigma$ are \emph{vertices}, whereas general $\sigma \in \Sigma$ are \emph{faces}. Every simplicial complex $\Sigma$ is naturally a topological space: Let $V$ be the set of vertices of~$\Sigma$ and identify elements of $V$ with their indicator functions $\mathds 1_v$ in~$\R^V$. Endow~$\R^V$ with the supremum-norm and consider 
\[
    \bigcup_{\sigma \in \Sigma} \conv\{\mathds 1_v \mid v \in \sigma\} \subset \R^V,
\]
the \emph{geometric realization} of~$\Sigma$. Whenever we refer to topological properties of~$\Sigma$, they are properties of this geometric realization. We refer to Kozlov~\cite{kozlov2007} and Matou\v sek~\cite{matousek2003} for the basics.

\begin{theorem}[Adams, Frick, and Virk~\cite{adams2023}]
\label{thm:gillespie}
    Let $X$ be a compact metric space. Let $\mathcal U$ be an open cover of~$X$, and let $\mathcal F$ be a simplicial complex of finite subsets of sets in~$\mathcal U$. Then $\mathcal F$ and $\mathcal P(X, \mathcal F)$ have isomorphic homotopy groups.
\end{theorem}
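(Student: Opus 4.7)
The natural map to analyze is $\Phi\colon |\mathcal F| \to \mathcal P(X,\mathcal F)$ sending $\sum_i t_i\mathds 1_{v_i} \mapsto \sum_i t_i\delta_{v_i}$; the plan is to prove $\Phi$ is a weak homotopy equivalence, which implies the conclusion. Continuity is the easy part: restricted to a closed simplex $\Delta_\sigma \subset |\mathcal F|$, the map $\Phi$ is affine into the finite-dimensional family of measures supported on $\sigma$, and the $1$-Wasserstein distance between $\sum_i t_i\delta_{v_i}$ and $\sum_i s_i\delta_{v_i}$ is bounded above by $\mathrm{diam}(\sigma)\cdot \tfrac12\sum_i|t_i - s_i|$, so $\Phi|_{\Delta_\sigma}$ is Lipschitz. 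Since $|\mathcal F|$ carries the CW topology, global continuity follows.

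Next I would establish $\pi_n$-isomorphisms by a direct approximation argument, using the open cover $\mathcal U$ as the bridge between the two topologies. Given $f\colon S^n \to \mathcal P(X,\mathcal F)$, for each $U\in \mathcal U$ and $\epsilon>0$ define the open neighborhood
\[
\mathcal N_{U,\epsilon} = \{\mu\in\mathcal P(X,\mathcal F) : \mu(B_\epsilon(U)) > 1-\epsilon\},
\]
where $B_\epsilon(U)$ is the open $\epsilon$-fattening of $U$. Since every $\mu\in\mathcal P(X,\mathcal F)$ has finite support contained in some face of $\mathcal F$, which by hypothesis lies in some $U\in\mathcal U$, these neighborhoods cover $\mathcal P(X,\mathcal F)$. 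Compactness of $f(S^n)$ and a Lebesgue-number argument yield a triangulation $T$ of $S^n$ and an assignment $\tau\mapsto U_\tau\in\mathcal U$ such that $f(\tau)\subset \mathcal N_{U_\tau,\epsilon}$ for each simplex $\tau$ of $T$. For each vertex $v$ of $T$, pick $x_v \in U_\tau$ for some $\tau \ni v$; the resulting vertex assignment sends each $\tau$ to a finite subset of $U_\tau$, which is automatically a face of $\mathcal F$ by the hypothesis. Linear extension produces $\widehat f \colon S^n \to |\mathcal F|$, and a straight-line homotopy inside each $\mathcal N_{U_\tau,\epsilon}$ (a convex subset of the space of probability measures on $X$) gives $f \simeq \Phi\circ \widehat f$. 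The same construction applied to $S^n \times [0,1]$ gives injectivity on $\pi_n$.

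The main obstacle is reconciling the fact that supports of measures are not well-behaved under Wasserstein limits with the discrete, support-driven structure of $|\mathcal F|$. If one naively defines neighborhoods by requiring exact support in $U$, they fail to be open, and the Lebesgue-number step collapses; the $\epsilon$-effective-support formulation fixes this at the cost of some book-keeping, and also forces the straight-line homotopy to remain in the correct neighborhood, which is where convexity of $\mathcal N_{U_\tau,\epsilon}$ inside the ambient space of probability measures is used. A secondary subtlety is that $\mathcal P(X,\mathcal F)$ is not in general a CW complex, so one must verify the Whitehead criterion on each $\pi_n$ by hand rather than invoke cellular approximation.
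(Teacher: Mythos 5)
This theorem is quoted from Adams--Frick--Virk~\cite{adams2023}; the present paper does not contain a proof of it, so there is nothing internal to compare against. Evaluating your argument on its own terms, there is a genuine gap in the straight-line homotopy step.

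You correctly isolate the core difficulty: support is not continuous in the Wasserstein topology, so neighborhoods defined by exact support fail to be open. Your fix is to work with the open sets $\mathcal N_{U,\epsilon}=\{\mu : \mu(B_\epsilon(U))>1-\epsilon\}$ and then use convexity of $\mathcal N_{U,\epsilon}$ to justify the straight-line homotopy $H(y,s)=(1-s)f(y)+s\,\Phi(\widehat f(y))$. But $\mathcal N_{U,\epsilon}$ is a convex subset of the space of \emph{all} Borel probability measures on~$X$; the homotopy must take values in $\mathcal P(X,\mathcal F)$, and $\mathcal N_{U,\epsilon}\cap\mathcal P(X,\mathcal F)$ is not convex. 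Concretely, for $0<s<1$ the measure $H(y,s)$ has support $\mathrm{supp}(f(y))\cup\{x_v : v\in\tau_y\}$, and for $H(y,s)$ to belong to $\mathcal P(X,\mathcal F)$ this union must be a face of $\mathcal F$, i.e.\ contained in a single $U'\in\mathcal U$. The condition $f(y)\in\mathcal N_{U_{\tau_y},\epsilon}$ only says that $1-\epsilon$ of the mass of $f(y)$ lies near $U_{\tau_y}$; it says nothing about the remaining $\epsilon$ of mass, whose support may lie in a completely different element of $\mathcal U$. For example $f(y)$ could put mass $1-\epsilon/2$ at a point of $U_{\tau_y}$ and mass $\epsilon/2$ at a point $b$ far from $U_{\tau_y}$ (with $\{a,b\}$ lying in some other $U'$, so that $f(y)\in\mathcal P(X,\mathcal F)$). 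Then $\mathrm{supp}(f(y))\cup\{x_v\}$ need not lie in any element of the cover, and $H(y,s)\notin\mathcal P(X,\mathcal F)$. The $\epsilon$-effective-support device keeps the homotopy inside $\mathcal N_{U_\tau,\epsilon}$ but not inside $\mathcal P(X,\mathcal F)$, which is the constraint that actually matters; this is not a book-keeping issue but a real obstruction to the naive linear homotopy.

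A secondary, more repairable issue: the vertex assignment $v\mapsto x_v\in U_\tau$ is made ``for some $\tau\ni v$,'' but the same $v$ lies in several simplices, and you then need $\{x_v : v\in\tau\}\subset U_\tau$ for \emph{every} $\tau$. This requires choosing $x_v$ relative to the closed star of~$v$ and verifying a compatibility condition across all $U_\tau$ for $\tau$ in the star, which the sketch does not do. The standard remedy in nerve-type simplicial approximation arguments is to refine the triangulation so each closed star maps into a single $\mathcal N_{U,\epsilon}$ and then argue that for a simplex $\tau=\{v_0,\dots,v_k\}$ the sets $U(v_0),\dots,U(v_k)$ have appropriate common structure; spelling this out is necessary. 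Taken together, the proposal identifies the right source of difficulty but the proposed resolution does not in fact produce a homotopy within $\mathcal P(X,\mathcal F)$; a correct argument needs a mechanism (partition of unity, controlled reweighting/truncation of measures, or a zigzag through an auxiliary space as in the cited references) that explicitly prevents the intermediate measures from acquiring supports outside $\mathcal F$.
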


Gillespie~\cite{gillespie2024} showed the stronger result that in this case $\mathcal F$ and $\mathcal P(X, \mathcal F)$ are weakly homotopy equivalent. Recall that a topological space $X$ is \emph{$n$-connected} for $n \ge 0$ if the first $n$ homotopy groups of $X$ vanish. The maximal~$n$ such that $X$ is $n$-connected will be denoted by~$\mathrm{conn}(X)$.

\begin{theorem}[Adamaszek and Adams~\cite{adamaszek2017}]
\label{thm:vr}
    Let $\frac{k}{2k+1} < r < \frac{k+1}{2k+3}$ for some integer~$k\ge1$. Let $\mathcal F$ be the simplicial complex of all finite subsets $X \subset S^1$ of diameter less than~$r$. Then $\mathcal F$ is homotopy equivalent to~$S^{2k+1}$, and in particular $\mathcal F$ is $2k$-connected.
\end{theorem}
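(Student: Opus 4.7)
The complex $\mathcal F$ is the Vietoris--Rips complex of $S^1$ at scale~$r$ (in the open or ``less than'' version). My plan is to describe $\mathcal F$ combinatorially via cyclic gaps, use discrete Morse theory to collapse it onto a low-dimensional cyclic ``spine,'' and then identify that spine with $S^{2k+1}$ using its natural rotational symmetry. I would also keep open the alternative of passing to the metric thickening $\mathcal P(S^1,\mathcal F)$ via Theorem~\ref{thm:gillespie} and constructing an explicit continuous deformation retract there.

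\textbf{Combinatorial setup and matching.} Order the vertices of $\sigma = \{t_0,\dots,t_{n-1}\} \subset S^1$ cyclically and record the gaps $g_i = t_{i+1}-t_i$ (indices mod $n$, $\sum_i g_i = 1$). Since $r < \tfrac12$ for every $k \ge 1$ in the stated range, a short case analysis gives $\mathrm{diam}(\sigma) = 1-\max_i g_i$, so $\sigma \in \mathcal F$ iff some $g_i > 1-r$. The complex is infinite-dimensional because arbitrarily many points can be packed into a short arc, but the inequalities $\tfrac{k}{2k+1} < r < \tfrac{k+1}{2k+3}$ mark off $2k+2$ as the ``stable'' cardinality at which equidistributed configurations on an arc of length just over $r$ are possible while $(2k+3)$-element ``tight'' configurations become collapsible. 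I would then define a discrete Morse matching pairing each $\sigma$ with $\sigma \cup \{p\}$ or $\sigma \setminus \{p\}$ for a distinguished point $p$ determined by the endpoints of the maximal gap (with a fixed tie-breaking rule for multiple equally large gaps). The critical cells should be the $(2k+2)$-element equidistributed configurations and their cyclic sub-configurations, all lying in a bounded-dimensional spine subcomplex.

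\textbf{Identifying the spine and main obstacle.} The spine carries a natural free $S^1$-action by rotation and a compatible free $\Z/(2k+2)$-action; identifying it with $S^{2k+1}$ amounts to recognizing it as the boundary of the cyclic polytope $C(2k+2, 2k+2)$, which is a simplicial $(2k+1)$-sphere. As a consistency check, Theorem~\ref{thm:dold} applied to the free $\Z/(2k+2)$-rotation on a neighborhood of the spine forces $\mathrm{conn}(\mathcal F) \le 2k$, while an explicit rotating-simplex cycle supplies a non-trivial class in $H_{2k+1}$ and hence a lower bound. The main technical obstacle I anticipate is verifying acyclicity of the Morse matching: the identity of the maximal gap changes combinatorially as vertices are added or removed, so the tie-breaking rule must be chosen so that no matching cycle arises—this is where many naive attempts fail. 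If the combinatorial argument becomes too intricate, I would fall back on the metric-thickening route: define an $S^1$-equivariant sliding homotopy in $\mathcal P(S^1,\mathcal F)$ that moves each configuration toward the equidistributed $(2k+2)$-configuration centered antipodally to its large gap, and verify continuity in the Wasserstein metric.
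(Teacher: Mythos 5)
The theorem you are asked about is stated in the paper as a citation to Adamaszek and Adams~\cite{adamaszek2017}; the paper does not reprove it, so there is no internal argument to compare against. Judged on its own, your proposal has a fatal error at the very first step.

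Your combinatorial characterization of $\mathcal F$ is wrong. You claim $\mathrm{diam}(\sigma) = 1 - \max_i g_i$ and hence $\sigma \in \mathcal F$ iff some gap exceeds $1-r$. Take $k=1$, so $\tfrac13 < r < \tfrac25$, and consider $\sigma = \{0, \tfrac13, \tfrac23\}$. All pairwise geodesic distances equal $\tfrac13 < r$, so $\mathrm{diam}(\sigma) = \tfrac13$ and $\sigma \in \mathcal F$. But every gap is $\tfrac13$, so $\max_i g_i = \tfrac13 < 1-r$, and $1 - \max_i g_i = \tfrac23 \ne \mathrm{diam}(\sigma)$. What you have described is the complex of finite sets contained in an arc of length $<r$ (a \v Cech-type complex of arcs), not the Vietoris--Rips complex $\mathcal F$ of sets of small diameter. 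These two complexes genuinely differ on $S^1$, and that difference is the whole point of the theorem: the richer homotopy type of the Rips complex comes precisely from ``spread-out'' configurations such as equidistributed $(2k+2)$-tuples, which your gap condition excludes. Indeed your own proposed critical cells, the equidistributed $(2k+2)$-point configurations, have all gaps equal to $\tfrac{1}{2k+2} < 1-r$, so they would not even be faces of the complex you described --- the proposal is internally inconsistent.

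Because the combinatorial model is wrong, the downstream steps (the Morse matching keyed to the maximal gap, the claim that the spine is the boundary of a cyclic polytope on $2k+2$ vertices, the connectivity bookkeeping) are not repairable as written. Two further cautions even after fixing the model: (i) discrete Morse theory on a simplicial complex with uncountably many vertices needs substantial justification and is not how Adamaszek--Adams argue --- they develop a theory of cyclic graphs and a ``winding fraction'' invariant, reduce to finite regular configurations, and pass to a limit; (ii) your ``consistency check'' via Theorem~\ref{thm:dold} only bounds connectivity from above under a free cellular group action on the Rips complex, which you have not exhibited, and it does not by itself identify the homotopy type. Your fallback idea of working in $\mathcal P(S^1,\mathcal F)$ and sliding mass toward an equidistributed configuration is closer in spirit to the metric-thickening arguments in~\cite{adams2020}, but it would require an honest continuity and well-definedness analysis that the sketch does not supply.
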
 

For $X$ and $Y$ topological spaces, the \emph{join} $X * Y$ is the quotient of $X \times Y \times [0,1]$ by the equivalence relation generated by $(x,y,0) \sim (x',y,0)$ and $(x,y,1)\sim(x,y',1)$ for $x,x'\in X$ and $y,y' \in Y$. This operation can be iterated and is associative, so that for example the threefold join $X * Y * Z$ is well-defined. Denote the $k$-fold join of~$X$ by~$X^{*k}$. Elements of $X^{*k}$ can uniquely be referred to as abstract convex combinations $\lambda_1x_1 \oplus \dots \oplus \lambda_kx_k$, where $x_i \in X$, $\lambda_i \ge 0$, and $\sum_i \lambda_i = 1$. In this notation, if some coefficient $\lambda_i = 0$, then the corresponding $x_i$ has no influence on the point in~$X^{*k}$.

\begin{lemma}[{\cite[Prop.~4.4.3]{matousek2003}}]
\label{lem:join-conn}
     Let $X$ and $Y$ be simplicial complexes. Then $\mathrm{conn}(X*Y) \ge \mathrm{conn}(X)+\mathrm{conn}(Y)+2$.
\end{lemma}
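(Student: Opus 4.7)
The plan is to use the natural CW structure on the join together with the contractibility of two cone subcomplexes. Set $m = \mathrm{conn}(X)$ and $n = \mathrm{conn}(Y)$. By CW approximation and the theory of minimal CW models, I may replace $X$ and $Y$ by homotopy equivalent CW complexes in which $X$ has a single vertex $v_X$ and no cells in dimensions $1, \dots, m$, and analogously for $Y$ with vertex $v_Y$. Now $X * Y$ carries its standard CW structure, whose cells are the cells of $X$, the cells of $Y$, and a join cell $\sigma * \tau$ of dimension $\dim \sigma + \dim \tau + 1$ for every pair of cells $\sigma \in X$ and $\tau \in Y$.

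First I would observe that the subcomplex $X * \{v_Y\} \subset X * Y$ is homeomorphic to the cone over $X$ with apex $v_Y$, hence contractible. Collapsing it yields a space homotopy equivalent to $X * Y$ whose only cells are the basepoint, cells of $Y \setminus \{v_Y\}$, and join cells $\sigma * \tau$ with $\tau \ne v_Y$. Within this first quotient, the image of $\{v_X\} * Y$ is the quotient of a cone by the $1$-cell joining its apex $v_X$ to $v_Y$, hence again contractible and still a subcomplex. Collapsing it in turn leaves only the basepoint and the join cells $\sigma * \tau$ with $\sigma \ne v_X$ and $\tau \ne v_Y$, all of which have dimension at least $(m + 1) + (n + 1) + 1 = m + n + 3$. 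The resulting space has no cells in dimensions $1$ through $m + n + 2$, so $X * Y$ is $(m + n + 2)$-connected.

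The main obstacle is verifying carefully that the image of $\{v_X\} * Y$ in the first quotient is really a contractible subcomplex to which the standard CW-pair collapsing theorem applies; this amounts to routine cell bookkeeping but must be done with care, since the intersection $(X * \{v_Y\}) \cap (\{v_X\} * Y)$ is the single edge $v_X * v_Y$ rather than a point. A cleaner alternative avoids minimal CW models entirely by invoking the homotopy equivalence $X * Y \simeq \Sigma(X \wedge Y)$ valid for CW complexes, combined with the standard bounds $\mathrm{conn}(X \wedge Y) \ge \mathrm{conn}(X) + \mathrm{conn}(Y) + 1$ and $\mathrm{conn}(\Sigma Z) = \mathrm{conn}(Z) + 1$; this reduces the proof to two well-known facts at the cost of importing more machinery.
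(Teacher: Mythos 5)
Your cell-counting argument is correct and complete for the regime $\mathrm{conn}(X),\,\mathrm{conn}(Y)\ge 0$, which is the only regime the paper actually uses (there $X=Y=S^1$, or $X$ is assumed $c$-connected with $c\ge 0$). You correctly identify and resolve the one delicate point: after collapsing $X*\{v_Y\}$, the image of $\{v_X\}*Y$ is not a cone but a cone modulo the arc from $v_X$ to $v_Y$, which is still a contractible subcomplex, so the second collapse is legitimate. After the two collapses the only positive-dimensional cells $\sigma*\tau$ have $\dim\sigma\ge m+1$ and $\dim\tau\ge n+1$, hence dimension at least $m+n+3$, and cellular approximation then gives $(m+n+2)$-connectedness. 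That is a sound proof.

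The paper itself gives no proof; it cites Matou\v sek's Proposition~4.4.3, whose argument runs through the reduced homology of the join (the K\"unneth-type formula $\widetilde H_r(X*Y)\cong\bigoplus_{i+j=r-1}\widetilde H_i(X)\otimes\widetilde H_j(Y)\oplus\text{Tor terms}$) together with the Hurewicz theorem and a separate check of simple connectivity. Your route trades that homological machinery for the existence of minimal CW models (a CW complex homotopy equivalent to an $m$-connected one with a single $0$-cell and no cells in dimensions $1,\dots,m$). This is a genuine trade rather than a free lunch: the minimal-model theorem is itself typically proved with Hurewicz, so the theorem is still lurking, just applied to $X$ and $Y$ individually rather than to $X*Y$. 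What your version buys is that the join's connectivity then falls out of pure dimension-counting with no K\"unneth or Tor bookkeeping, and no separate argument for $\pi_1(X*Y)=0$. Your suggested alternative via $X*Y\simeq\Sigma(X\wedge Y)$ is also standard and is essentially the homological proof repackaged.

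Two small caveats worth recording. First, as stated the lemma allows $\mathrm{conn}(X)=-1$ (nonempty, disconnected) or $\mathrm{conn}(X)=-2$ (empty, under Matou\v sek's convention), and your reduction to a one-vertex CW model does not make sense there; those cases are degenerate and handled by direct inspection ($X*Y=Y$ when $X=\emptyset$, and the join of two nonempty spaces is path-connected), but a complete write-up should say so. Second, you implicitly use that a homotopy equivalence $X\to X'$ and $Y\to Y'$ induces a homotopy equivalence $X*Y\to X'*Y'$; this is true for CW complexes and should be cited or observed when passing from the original simplicial complexes to their minimal CW replacements.
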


Let $X$ be a compact metric space with a $G$-action. Then the $k$-fold join~$X^{*k}$ carries the diagonal $G$-action given by $g\cdot (\lambda_1x_1 \oplus \dots \oplus \lambda_kx_k) = \lambda_1g\cdot x_1 \oplus \dots \oplus \lambda_kg\cdot x_k$. Let $\mathcal F$ be a set of subsets of~$X$ such that for all $A\in \mathcal F$ and for all $g\in G$ we have that $g\cdot A \in \mathcal F$. In this case we say that $\mathcal F$ is \emph{invariant under the $G$-action}. The $G$-action on $X$ canonically lifts to $\mathcal P(X, \mathcal F)$ via $g\cdot \mu(A) = \mu(g^{-1}\cdot A)$.

\section{The topological approach}
\label{sec:approach}

A configuration space / test map scheme is a popular approach to reduce not necessarily topological problems to a topological question. The configuration space parametrizes the space of all potential solutions for the problem, while the test map measures to which extent a potential solution deviates from an actual one. The goal then is to use properties of the test map (such as the map respecting certain inherent symmetries) to show that the test map must have a zero on the configuration space. This implies the existence of a solution.

In this section we develop a configuration space / test map scheme that can be used to establish that for a given finite-dimensional vector space~$V$ of zero mean continuous functions $X \to \R$ and for a given set~$\mathcal F$ of subsets of~$X$ there is an $A \in \mathcal F$ such that every $f\in V$ changes sign on~$A$. In particular, if $A$ is connected, then every $f \in V$ has a zero in~$A$. The strength of these results, that is, the largest dimension of a vector space $V$ exhibiting this phenomenon, is governed by the topology of the simplicial complex obtained as the downward-closure of the finite subsets of the set system~$\mathcal F$. For compact metric space~$X$ and an open cover~$\mathcal F$, this is up to weak homotopy equivalence the space~$\mathcal P(X, \mathcal F)$ of probability measures with support fully contained in some $A\in \mathcal F$ and equipped with an optimal transport metric; see~\cite{gillespie2024}.

Let $V$ be a finite-dimensional vector space of zero mean continuous functions $X \to \R$, and let $f_1, \dots, f_n \colon X \to \R$ be a basis of~$V$.
Let $\gamma \colon X \to \R^n$ be the map $\gamma(x) = (f_1(x), \dots, f_n(x))$. By integration we get a map
\[
    \Gamma\colon \mathcal P(X, \mathcal F) \to \R^n, \ \mu \mapsto \int_X \gamma(x) \ d\mu.
\]

\begin{lemma}
\label{lem:cstm}
    Let $X$ be a compact metric space, and let $\mathcal F$ be a set of closed subsets of~$X$. With notation as above, $\Gamma$ has a zero if and only if there is an $A \in \mathcal F$ such that every $f\in V$ changes sign on~$A$. In particular, if every $A \in\mathcal F$ is connected, then $\Gamma$ has a zero if and only if there is an $A \in \mathcal F$ such that every $f\in V$ has a zero in~$A$.
\end{lemma}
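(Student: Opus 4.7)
The plan is to verify both implications directly, leaning on Lemma~\ref{lem:hahn-banach} together with Carath\'eodory's theorem (Theorem~\ref{thm:carath}). Recall that $\mathcal P(X, \mathcal F)$ consists of Borel probability measures supported in some $A \in \mathcal F$, and $\Gamma(\mu) = 0$ is equivalent to $\int_X f_i \, d\mu = 0$ for each basis element $f_i$, which by linearity is equivalent to $\int_X f \, d\mu = 0$ for every $f \in V$.

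For the forward direction, I would start from a measure $\mu \in \mathcal P(X, \mathcal F)$ with $\Gamma(\mu) = 0$ and let $A \in \mathcal F$ be a closed set with $\mathrm{supp}(\mu) \subset A$. If some $f \in V$ fails to change sign on $A$, then $f$ is nowhere zero and of constant sign on $A$; since $A$ is a closed subset of the compact space $X$ it is itself compact, so $|f|$ attains a strictly positive minimum $\varepsilon > 0$ on $A$. Then $\bigl|\int_X f \, d\mu\bigr| = \bigl|\int_A f \, d\mu\bigr| \ge \varepsilon \mu(A) = \varepsilon > 0$, contradicting $\int_X f \, d\mu = 0$.

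For the backward direction, assume there is $A \in \mathcal F$ on which every $f \in V$ changes sign. By Lemma~\ref{lem:hahn-banach} this is equivalent to $0 \in \conv(\gamma(A))$, and Carath\'eodory then yields points $x_1, \dots, x_{n+1} \in A$ and weights $\lambda_i \ge 0$ with $\sum_i \lambda_i = 1$ and $\sum_i \lambda_i \gamma(x_i) = 0$. The atomic measure $\mu = \sum_i \lambda_i \delta_{x_i}$ has support contained in $A$, hence lies in $\mathcal P(X, \mathcal F)$, and satisfies $\Gamma(\mu) = \sum_i \lambda_i \gamma(x_i) = 0$. For the ``in particular'' statement I would invoke the intermediate value theorem: if $A$ is connected and $f$ changes sign on $A$, then the continuous image $f(A) \subset \R$ is connected and contains both a non-negative and a non-positive value, hence contains~$0$.

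The main obstacle, such as it is, lies in carefully matching the support convention in the forward direction; once one reads ``$\mu$ is supported in $A$'' as $\mathrm{supp}(\mu) \subset A$ with $A$ closed, compactness of $A$ and continuity of $f$ give the strict positivity of the integral. Everything else is a direct translation via Lemma~\ref{lem:hahn-banach}, Carath\'eodory, and the intermediate value theorem.
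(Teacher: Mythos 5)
Your proof is correct, and the ``in particular'' part and the direction ``sign change implies zero of~$\Gamma$'' match the paper's route exactly (Lemma~\ref{lem:hahn-banach} gives $0 \in \conv(\gamma(A))$, Carath\'eodory produces the atomic measure). Where you genuinely diverge is the direction ``$\Gamma(\mu) = 0$ implies every $f \in V$ changes sign.'' The paper argues indirectly: $\Gamma(\mu) = 0$ places $0$ in the closure of $\conv(\gamma(S))$ with $S = \mathrm{supp}(\mu)$, then compactness of $A \supset S$ upgrades this to $0 \in \conv(\gamma(A))$, and a further application of Carath\'eodory (via the argument in Proposition~\ref{prop:sign-change}) converts that into a sign change on~$A$. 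You instead argue by contradiction directly at the level of measures: if $f$ has no sign change on $A$, then $f$ is nowhere zero and of one strict sign on the compact set~$A$, so $|f| \ge \varepsilon > 0$ there, forcing $\left|\int_X f \, d\mu\right| \ge \varepsilon$, which contradicts $\int_X f\, d\mu = 0$. This is a cleaner and more elementary derivation of that implication --- it bypasses Carath\'eodory entirely for this direction and uses only continuity, compactness of the closed set $A \subset X$, and the observation that ``no sign change'' under the paper's definition (there exist $x, y$ with $f(x) \ge 0$, $f(y) \le 0$) really does force $f$ to be nowhere vanishing and of constant strict sign. The trade-off is that the paper's version produces the finitely supported weight function $w$ explicitly in the course of the argument, which is then reused elsewhere (e.g.\ in the proof of Theorem~\ref{thm:int-eq}); your contradiction argument establishes the equivalence but produces no such witness. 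Both are valid proofs of the lemma as stated.
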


\begin{proof}
    By Proposition~\ref{prop:sign-change} every $f\in V$ changes sign on~$A$ if and only if there is a finitely-supported (and non-zero) $w\colon A\to [0,1]$ with $0 = \int_X f(x) \ dx = \sum_{a\in A} w(a)f(a)$ for all~$f \in V$. We may assume that $\sum_{a\in A} w(a) = 1$, and in fact the $w$ constructed in the proof of Proposition~\ref{prop:sign-change} has this property without any need for normalization. Let $\mu = \sum_{a\in A} w(a)\delta_a$. Then
    \[
        \int_X f_j(x) \ d\mu = \sum_{a \in A} w(a)f_j(a) = 0.
    \]
    Thus, $\Gamma(\mu) = 0$.

    Conversely, let $\mu \in \mathcal P(X, \mathcal F)$ with $\Gamma(\mu) = 0$. By definition of Riemann integral $0$ is in the closure of the convex hull of~$\gamma(S)$, where $S$ denotes the support of~$\mu$. There is a closed set $A \supset S$ with $A \in \mathcal F$. Then since $\gamma(A)$ is compact, so is its convex hull, and thus $0 \in \conv(\gamma(A))$. By Theorem~\ref{thm:carath} there are $x_1, \dots, x_{n+1} \in A$ and $\lambda_1, \dots, \lambda_{n+1} \ge 0$ with $\sum_i \lambda_i = 1$ such that $0 = \sum_i \lambda_i \gamma(x_i)$. Define $w\colon A \to [0,1]$ by $w(x_i) = \lambda_i$ and $w(a) = 0$ for $a \in A \setminus \{x_1, \dots, x_{n+1}\}$. Then $0 = \sum_{a \in A} w(a)f_j(a)$ and thus $0 = \sum_{a \in A} w(a)f(a)$ for all $f\in V$. As remarked above, this is sufficient. 

    The second part of the lemma follows immediately from the first part and the intermediate value theorem.
\end{proof}

\begin{remark}
\label{rem:homogeneous}
    Let $X$ be a compact metric space that is homogeneous, that is, there is a group~$G$ acting by isometries on~$X$ such that for any $x,y\in X$ there is a $g\in G$ with $g\cdot x = y$. Let $\mathcal F$ be the set of closed geodesic balls in~$X$ of some fixed radius~$r$. Let $\gamma\colon X\to \R^n$ be $G$-equivariant, where $0$ is the only fixed point of the $G$-action on~$\R^n$. If for $\Gamma$ as above, $\Gamma(\mu) = 0$ for $\mu \in \mathcal P(X, \mathcal F)$ supported in some geodesic ball~$B$ of radius~$r$, then by homogeneity for any geodesic ball~$B'$ there is $\mu' \in \mathcal P(X, \mathcal F)$ supported in~$B'$ such that $\Gamma(\mu') = 0$. In particular, for a homogeneous metric space~$X$ and $G$-equivariant $\gamma\colon X\to \R^n$, if Lemma~\ref{lem:cstm} guarantees that all $f \in V$ have a zero on some geodesic ball of radius~$r$, then every $f \in V$ has a zero on every geodesic ball of radius~$r$.
\end{remark}

For $G$-equivariant $\gamma\colon X\to \R^n$, Lemma~\ref{lem:cstm} combined with Borsuk--Ulam theorems for larger symmetry groups yields the following configuration space / test map scheme for detecting common zeros of compositions of linear functionals with~$\gamma$ on some set $A\in \mathcal F$ in terms of the topology of~$\mathcal P(X, \mathcal F)$.

\begin{lemma} 
\label{lem:cstm-conn}
    Let $G$ be a nontrivial finite group. Assume that $G$ acts on the compact metric space~$X$ and acts on~$\R^n$ such that the action on $\R^n\setminus \{0\}$ is free (or fixed-point free if $G = (\Z/p)^k$ for some prime~$p$). Let $\gamma \colon X \to \R^n$ be $G$-equivariant. Let $\mathcal F$ be a set of connected closed subsets of~$X$ that is invariant under the $G$-action. If $\mathcal P(X, \mathcal F)$ is $(n-1)$-connected, then there is an $A\in\mathcal F$ such that all compositions of $\gamma$ with a linear functional, that is, maps $f_a \colon X \to \R$ given by $f_a(x) = \langle a, \gamma(x) \rangle$ for $a \in \R^n$, have a zero on~$A$.
\end{lemma}

\begin{proof}
    Let
\[
    \Gamma\colon \mathcal P(X, \mathcal F) \to \R^n, \ \mu \mapsto \int_X \gamma(x) \ d\mu.
\]
    For $\mu \in \mathcal P(X, \mathcal F)$ with finite support, say $\mu = \sum_{i=1}^{n}\lambda_i \delta_{x_i}$, the evaluation $\Gamma(\mu) = \sum_i \lambda_i\gamma(x_i)$ is a convex combination of~$\gamma(x_i)$. For $g \in G$ we have that 
    \[
    \Gamma(g\cdot \mu) = \sum_i \lambda_i\gamma(g\cdot x_i)
    = g\cdot \sum_i \lambda_i\gamma(x_i) = g\cdot \Gamma(\mu).
    \]
    Thus by approximation, since $\gamma$ is $G$-equivariant so is~$\Gamma$. The map~$\Gamma$ has a zero by Theorem~\ref{thm:dold}, or by Theorem~\ref{thm:volov} if the $G$-action is only fixed-point free and $G=(\Z/p)^k$.

    By Lemma~\ref{lem:cstm} there is an $A \in \mathcal F$ such that every linear combination of the coordinates of~$\gamma$, that is, every map $f_a \colon X \to \R$ given by $f_a(x) = \langle a, \gamma(x) \rangle$ for $a \in \R^n$, changes sign on~$A$. Then every $f_a$ has a zero on~$A$, since $A$ is connected.
\end{proof}

The proof of Lemma~\ref{lem:cstm-conn} also shows the following version that we will sometimes find more convenient:

\begin{lemma}
\label{lem:cstm-eq}
    Let $G$ be a nontrivial finite group. Assume that $G$ acts on the compact metric space~$X$ and acts on~$\R^n$ such that the action on $\R^n\setminus \{0\}$ is free (or fixed-point free if $G = (\Z/p)^k$ for some prime~$p$). Let $\gamma \colon X \to \R^n$ be $G$-equivariant. Let $\mathcal F$ be a set of connected closed subsets of~$X$ that is invariant under the $G$-action. If there is a $G$-equivariant map $Y \to \mathcal P(X, \mathcal F)$, but no $G$-equivariant $Y \to S^{n-1}$, then there is an $A\in\mathcal F$ such that all compositions of $\gamma$ with a linear functional, that is, maps $f_a \colon X \to \R$ given by $f_a(x) = \langle a, \gamma(x) \rangle$ for $a \in \R^n$, have a zero on~$A$.
\end{lemma}

As a consequence of this setup, we can prove the $(c+2)$-fold precision integration formula of Theorem~\ref{thm:int-eq}. Here $c$ refers to the homotopical connectivity of the domain. 

\begin{proof}[Proof of Thm.~\ref{thm:int-eq}]
    By Proposition~\ref{prop:sign-change} we have to show that there is a set $A \subset X$ of cardinality~$k$ such that every $g \in V$ changes sign on~$A$. Consider as before 
    \[
        \Gamma\colon \mathcal P(X, \mathcal F) \to \R^n, \ \mu \mapsto \int_X \gamma(x) \ d\mu,
    \]
    where $\mathcal F$ contains all sets of cardinality at most~$k$. By Lemma~\ref{lem:cstm} we have to show that $\Gamma$ has a zero. If not, $\Gamma$ induces a $G$-equivariant map $\mathcal P(X, \mathcal F) \to S^{n-1}$. The $k$-fold join~$X^{*k}$ is $(kc+2k-2)$-connected (by Lemma~\ref{lem:join-conn}) and the map 
    \[
        \Phi\colon X^{*k} \to \mathcal P(X, \mathcal F), \ \Phi(\lambda_1x_1 \oplus \dots \oplus \lambda_kx_k) = \sum_i \lambda_i\delta_{x_i}
    \]
    is $G$-equivariant. By composition with $\Gamma$ we get a $G$-equivariant map $X^{*k} \to S^{n-1}$, in contradiction to Theorem~\ref{thm:dold}.
\end{proof}

Theorem~\ref{thm:int-eq} guarantees that functions in an $(n+1)$-dimensional vector space of maps $X \to \R$ can be exactly integrated by evaluation in about $\frac{n}{c+2}$ many points of~$X$. Even for $c=0$, that is, for path-connected~$X$, this yields double-precision integration formulas as in Theorem~\ref{thm:double-precision}. For highly connected~$X$, by which we mean a $(c+1)$-dimensional (e.g., as a simplicial complex) and $c$-connected~$X$, the space~$\mathcal P(X, \mathcal F)$ of probability measures supported in at most~$k$ points of~$X$, has dimension $k(c+2)-1$. If $k(c+2)-1 < n$ or equivalently $k(c+2) \le n$, then generically the map~$\Gamma$ will not have zeros, and so for highly connected~$X$ Theorem~\ref{thm:int-eq} is generically optimal.

\section{The topology of Carath\'eodory-type results for convex curves}
\label{sec:cara}

Recall that an embedding $\gamma\colon S^1 \to \R^d$ (also called a \emph{closed curve}) is \emph{convex} if every affine hyperplane in~$\R^d$ intersects it in at most~$d$ points. As explained in Subsection~\ref{subsec:cara} a closed convex curve is up to similarity the trigonometric moment curve, and in particular the orbit of a circle action on Euclidean space. This is already claimed in Carath\'eodory's paper~\cite{caratheodory1911}, and is a standard result in algebraic geometry~{\cite[Prop.~18.9]{harris2013algebraic}} as well as in an equivalent formulation in the language of Chebyshev spaces~{\cite[Ch.~1, \S~4]{karlin}}. Here we first provide an elementary proof of the weaker result that any closed convex curve can be arbitrarily well approximated by a $\Z/p$-equivariant map from the circle. 

\begin{lemma}
\label{lem:Zp}
    Let $\gamma \colon S^1 \to \R^{2d}$ be a closed convex curve such that the origin is in the interior of~$\conv(\gamma(S^1))$, and let $\varepsilon > 0$. Then there is a sufficiently large prime~$p$ and $p$ points $x_1, \dots, x_p$ on~$\gamma$ such that there is a $\Z/p$-action on~$\R^{2d}$ that respects rays, which is free on~$\R^d \setminus \{0\}$ and such that $x_1, \dots, x_p$ is an orbit of this action. Further, the $x_i$ can be chosen such that every point in the image of~$\gamma$ is at distance at most~$\varepsilon$ from some~$x_i$.
\end{lemma}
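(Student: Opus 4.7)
My plan is to choose a prime $p$ large enough that $p$ suitable points on~$\gamma$ are $\varepsilon$-dense in the image of~$\gamma$ and lie on distinct rays through the origin, and then to construct a ray-respecting $\Z/p$-action on $\R^{2d}$, free away from the origin, having these points as an orbit. A ray-respecting homeomorphism of $\R^{2d}$ is determined by two pieces of data: (i) an induced self-homeomorphism $\bar T$ of the space of rays $(\R^{2d}\setminus\{0\})/\R_{>0}\cong S^{2d-1}$, and (ii) a continuous positive ``scale'' function along each ray. I construct these separately and then assemble them into a single homeomorphism~$T$.

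For (i), set $y_i = x_i/\|x_i\|\in S^{2d-1}$. The standard free $\Z/p$-action on $\R^{2d}\cong\C^d$ by multiplication by $\zeta = e^{2\pi i/p}$ restricts to the orbit $O = \{e_1,\zeta e_1,\dots,\zeta^{p-1}e_1\}$ on $S^{2d-1}$. For $d\ge 2$ the homeomorphism group of $S^{2d-1}$ is $p$-transitive for every~$p$ (drag points one at a time along arcs in a connected manifold of dimension at least two), so a homeomorphism $h\colon S^{2d-1}\to S^{2d-1}$ with $h(\zeta^{i-1}e_1)=y_i$ exists. For $d=1$ the radial projection $t\mapsto \gamma(t)/\|\gamma(t)\|$ is a homeomorphism $S^1\to S^1$, because each ray from the origin meets the closed convex curve exactly once; after possibly re-parameterizing~$\gamma$, the $y_i$ then match the cyclic order of $O$ on~$S^1$, and an orientation-preserving $h$ exists. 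Letting $\rho$ denote multiplication by~$\zeta$, the conjugate $\bar T := h\rho h^{-1}$ is a free order-$p$ self-homeomorphism of $S^{2d-1}$ with $\bar T(y_i) = y_{i+1}$.

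For (ii), define $T(\lambda y) = \lambda\sigma(y)\bar T(y)$ for $y\in S^{2d-1}$ and $\lambda\ge 0$, where $\sigma\colon S^{2d-1}\to\R_{>0}$ is to be determined. A direct iteration gives $T^k(\lambda y) = \lambda\prod_{j=0}^{k-1}\sigma(\bar T^j(y))\cdot\bar T^k(y)$, so $T^p = \mathrm{id}$ is equivalent to the multiplicative cocycle condition $\prod_{j=0}^{p-1}\sigma(\bar T^j(y))=1$ around every $\bar T$-orbit. This is automatic if we take $\sigma(y) = \phi(\bar T(y))/\phi(y)$ for a continuous positive $\phi$ on~$S^{2d-1}$, since the product telescopes. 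The requirement $T(x_i)=x_{i+1}$ then reads $\phi(y_{i+1})/\phi(y_i) = \|x_{i+1}\|/\|x_i\|$, so it suffices to prescribe $\phi(y_i) = \|x_i\|$ on the finite set $\{y_1,\dots,y_p\}$ and extend to a continuous positive function on~$S^{2d-1}$ (for instance by Tietze). The resulting $T$ has order~$p$ by the cocycle identity, respects rays by construction, is free on $\R^{2d}\setminus\{0\}$ since $\bar T$ is free on~$S^{2d-1}$, and cycles through an $\varepsilon$-net on~$\gamma(S^1)$ provided $p$ is large (by uniform continuity of~$\gamma$).

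The main obstacle is step (i): for $d\ge 2$ it reduces to the routine transitivity of homeomorphism groups of spheres of dimension at least two, but for $d=1$ the cyclic-order compatibility is genuinely used and is where the convexity of~$\gamma$ enters. Secondary technical points are (a) perturbing the parameter values slightly to guarantee the $y_i$ are distinct on~$S^{2d-1}$ (possible because a line through the origin lies in hyperplanes, which meet~$\gamma$ in at most $2d$ points by the convex-curve hypothesis, so the set of ``bad'' parameter choices has measure zero), and (b) verifying continuity of $T$ at the origin, which follows from the boundedness of~$\sigma$ on the compact sphere.
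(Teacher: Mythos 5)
Your proof is correct, and it takes a genuinely different route than the paper. The paper's proof is polytopal: it invokes Steinitz's theorem to find points $x_1,\dots,x_p$ on $\gamma$ whose convex hull has the origin in its interior, proves (using the convex-curve hypothesis) that these points are in convex position and that no $2d+1$ of them lie in a hyperplane, so that $P=\conv\{x_1,\dots,x_p\}$ is a simplicial polytope, and then defines the $\Z/p$-action by cyclically permuting vertices, extending affinely over faces of $\partial P$, and extending conically (radially) to all of $\R^{2d}$. By contrast, you never form a polytope at all: you split a ray-respecting homeomorphism into its induced self-map of $S^{2d-1}$ and a radial scale factor, realize the former as a conjugate $h\rho h^{-1}$ of the standard free rotation using $p$-transitivity of $\mathrm{Homeo}(S^{2d-1})$ (with the cyclic-order observation for $d=1$), and solve the resulting multiplicative cocycle condition for the scale by the coboundary trick $\sigma=\phi\circ\bar T/\phi$. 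Your route avoids Steinitz, avoids the convex-position lemma, and avoids the claim that the polytope is simplicial; in exchange it is less explicit (abstract homeomorphisms in place of affine maps), it needs a separate check that the $x_i$ lie on distinct rays, and the $d=1$ case must be treated specially. The convexity of $\gamma$ enters your argument only in two mild ways: to ensure distinct rays (each ray meets $\gamma$ finitely often) and, for $d=1$, to make the radial projection a homeomorphism so that cyclic orders match. One stylistic remark: your phrase about the ``set of bad parameter choices having measure zero'' is looser than needed; the cleaner statement is that for each fixed parameter $t_i$ only finitely many $t_j$ put $\gamma(t_j)$ on the ray through $\gamma(t_i)$, so the $t_i$ can be perturbed one at a time to avoid coincidences while retaining $\varepsilon$-density.
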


\begin{proof}
    Let $p \ge 4d$ be a prime. Since the origin is in the convex hull of~$\gamma(S^1)$, by Steinitz' theorem, Theorem~\ref{thm:steinitz}, there are $p$ points $x_1, \dots, x_p$ such that the origin is in the interior of~$\conv \{x_1, \dots, x_p\}$. By potentially increasing~$p$ we can enlarge the set of points $x_i$ such that every $y \in \gamma(S^1)$ is at distance at most~$\varepsilon$ from some~$x_i$. These points are in convex position: Otherwise we could find an $x_j$ and and a set $I \subset [p] \setminus \{i\}$ of size $2d+1$ such that $x_j \in \conv \{x_i \ \mid \ i \in I\}$. The point $x_j$ is necessarily in the interior of the simplex $\conv \{x_i \ \mid \ i \in I\}$, since otherwise $2d+1$ points would lie on a common hyperplane. Let $k,\ell \in I$ such that no other point $x_i$ for $i \in I$ lies between $x_k$ and~$x_\ell$ along~$\gamma$. Continuously move $x_k$ towards $x_\ell$ along~$\gamma$, that is, $y(t)$ is a point on $\gamma$ that continuously depends on~$t$ with $y(0) = x_k$, $y(1) = x_\ell$ and $y(t) \ne x_i$ for all $t \in (0,1)$ and all $i \in I$. By the above $x_j$ will remain in the interior of $\conv(\{x_i \ \mid \ i \in I \setminus \{k\}\} \cup \{y(t)\})$ throughout. For $t=1$ this implies that the $2d+1$ points $x_j$ and $\{x_i \ \mid \ i \in I \setminus \{k\}\}$ are contained in a common hyperplane, a contradiction.

    Thus $P = \conv\{x_1, \dots, x_p\}$ is a simplicial polytope with the origin in its interior. A $\Z/p$-action on the vertices of this polytope is given by $x_i \mapsto x_{i+1}$ with indices modulo~$p$. This induces a unique action on the boundary of~$P$ by affinely extending to faces. Lastly, by declaring that $0$ is a fixed point of the action, the $\Z/p$-action extends to the cones over faces of the boundary of~$P$ and thus to all of~$\R^{2d}$. Since no proper face of $P$ is setwise fixed, this action is free away from~$0$.
\end{proof}

We will first provide an equivariant-topological proof of Theorem~\ref{thm:conv-curve} and as a consequence of Theorem~\ref{thm:babenko}. Recall that Theorem~\ref{thm:conv-curve} states that for a closed convex curve $\gamma \colon S^1 \to \R^{2d}$, any point in~$\conv(\gamma(S^1))$ is a convex combination of at most $d+1$ points along~$\gamma$.

\begin{proof}[{Proof of Thm.~\ref{thm:conv-curve}}]
    Let $x_0 \in \conv(\gamma(S^1))$. We may assume that $x_0$ is in the interior of this convex hull; the general case follows by approximation and compactness of~$\gamma(S^1)$. By translation we assume that $x_0 = 0$. 
    
    Let $\varepsilon > 0$. Let $p$ be a sufficiently large prime, and $x_1, \dots, x_p$ points along~$\gamma$ chosen according to Lemma~\ref{lem:Zp} such that the map $f\colon S^1 \to \R^{2d}$ that agrees with $\gamma$ on~$\gamma^{-1}(J)$, where $J$ is the interval along $\gamma$ from~$x_1$ to~$x_2$, and otherwise is $\Z/p$-equivariant deviates from $\gamma$ pointwise by at most~$\varepsilon$. Define the $\Z/p$-equivariant map
    \[
        F \colon (S^1)^{*(d+1)} \to \R^{2d}, \ \lambda_1x_1 \oplus \dots \oplus \lambda_px_p \mapsto \lambda_1f(x_1) + \dots + \lambda_pf(x_p).
    \]
    Since $(S^1)^{*(d+1)}$ is $2d$-connected by Lemma~\ref{lem:join-conn} (in fact it is homeomorphic to~$S^{2d+1}$), the map $F$ has a zero: If $F$ did not have a zero, it would induce a map $S^{2d+1} \to S^{2d-1}$ by normalization, which will again be equivariant since the $\Z/p$-action on~$\R^{2d}$ respects rays. This would contradict Theorem~\ref{thm:dold}. This implies that there are $y'_1, \dots, y'_{d+1} \in f(S^1)$ with $0 \in \conv\{y'_1, \dots, y'_{d+1}\}$. Let $y_i$ be a point on $\gamma(S^1)$ at distance less than $\varepsilon$ from~$y'_i$. Then $\conv\{y_1, \dots, y_{d+1}\}$ is at distance less than $\varepsilon$ from~$0$. Letting $\varepsilon$ go to $0$ finishes the proof by compactness of~$S^1$.
\end{proof}

We point out that Theorem~\ref{thm:babenko} follows easily from Theorem~\ref{thm:conv-curve}; in fact, we will prove the following generalization, also due to Babenko~\cite{babenko1984}:

\begin{theorem}[Babenko]
\label{thm:babenko-chebyshev}
    Let $V \subset C(S^1,\R)$ be a $(2d+1)$-dimensional Chebyshev space. Then there is a closed interval~$I \subset S^1$ of length~$\frac{d}{d+1}$ such that every $f \in V$ with zero mean has a zero on~$I$.
\end{theorem}

\begin{proof}
    Let $f_1, \dots, f_{2d}$ be a basis for the codimension one subspace of zero mean maps in~$V$, and let $\gamma\colon S^1 \to \R^{2d}$ be given by $\gamma(x) = (f_1(x), \dots, f_{2d}(x))$. Let $a \in \R^{2d} \setminus \{0\}$ and let $b \in \R$. The set of solutions of the equation $\langle a, \gamma(x)\rangle = b$ is the preimage~$f_a^{-1}(b)$, where $f_a = \sum_i a_if_i$, and thus has size at most~$2d$. Thus $\gamma$ is a convex curve. By Theorem~\ref{thm:conv-curve}, there are $d+1$ points $t_1, \dots, t_{d+1} \in S^1$ such that $0 \in \conv \{\gamma(t_1), \dots, \gamma(t_{d+1})\}$. These $d+1$ points are necessarily contained in a closed interval~$I$ of length~$\frac{d}{d+1}$. By Lemma~\ref{lem:hahn-banach} all zero mean $f \in V$ change sign on~$I$. Thus by continuity all zero mean $f\in V$ have a zero on~$I$.
\end{proof}

Theorem~\ref{thm:babenko} that trigonometric polynomials of degree at most~$d$ have a zero on \emph{any} closed interval $I \subset S^1$ of length~$\frac{d}{d+1}$ follows from Theorem~\ref{thm:babenko-chebyshev} by Remark~\ref{rem:homogeneous}, that is, because $\gamma$ as in the proof above is actually an orbit of an isometric circle action on~$\R^{2d}$.

In the proof of Theorem~\ref{thm:conv-curve} convexity was only used to approximate $\gamma$ by $\Z/p$-equivariant maps. In particular, the result remains true for $\Z/p$-equivariant maps from the circle, a much larger class of maps than closed convex curves. Since the Carath\'eodory-type result Theorem~\ref{thm:conv-curve} was the key ingredient in the proof of Theorem~\ref{thm:babenko-chebyshev} this implies that results about the distribution of zeros in Chebyshev spaces may be extended to spaces that consists of compositions of affine functionals and $\Z/p$-equivariant maps, whereas Chebyshev spaces consist of compositions of affine functionals with convex curves. Theorems~\ref{thm:equivariant} and~\ref{thm:preimage-vr} are those generalizations. We prove them now.

\begin{proof}[Proof of Theorem~\ref{thm:equivariant}]
    Let $\mathcal F$ be the set of all closed intervals of length~$\frac{d}{d+1}$ in~$S^1$. Let 
    \[
        \Phi\colon (S^1)^{*(d+1)} \to \mathcal P(S^1, \mathcal F), \ 
        \Phi(\lambda_1x_2 \oplus \dots \oplus \lambda_{d+1}x_{d+1}) = \sum_i \lambda_i\delta_{x_i}.
    \]
    Any probability measure in the image of~$\Phi$ is supported in at most $d+1$ points, and in particular, this support is a subset of some interval $A \in \mathcal F$. The join $(S^1)^{*(d+1)}$ is $2d$-connected by Lemma~\ref{lem:join-conn}, in fact, it is homeomorphic to~$S^{2d+1}$, and thus does not admit a $\Z/p$-equivariant map to~$\R^{2d}\setminus\{0\} \simeq S^{2d-1}$ by Theorem~\ref{thm:dold}. Now use Lemma~\ref{lem:cstm-eq}.
\end{proof}

\begin{remark}
    \label{rem:trigpolys}
    Fix a prime $p$ and let $f\colon S^1\to \R$ be a function whose Fourier expansion converges uniformly to~$f$. Theorem~\ref{thm:equivariant} yields that if $\widehat f(pk)=0$ for all $k\in \Z$, then $f$ has a zero on every interval of length~$(p-1)/p$. In fact, if $S\subset \Z_+$ is the spectrum of~$f$, then $f$ has a zero in every closed interval of length $\frac{r}{r+1}\le \frac{p-1}{p}$, where $r=|\{k \textrm{ mod } p\mid k\in S\}|$. To see this, first note that it suffices by uniform converge to consider the case where the Fourier expansion is a trigonometric polynomial, that is, $f$ is a trigonometric polynomial with spectrum~$S$ such that every $k \in S$ is nonzero modulo~$p$. Define $S_j=\{k\in S\mid k\textrm{ mod } p \equiv j\}$. Note that we can identify $\R^{2r}$ with $\C^{r}$ and then define $\gamma\colon S^1\to \C^r$ by setting component $i$ to be $\gamma(t)_i=\sum_{k\in S_j}\widehat f(k)e^{2\pi i kt }$. Then $\gamma$ is $\Z/p$-equivariant, and the result follows from Theorem~\ref{thm:equivariant} by setting $a$ to be a vector with $1$ in every component. See Tabachnikov~\cite{tabachnikov1997} for related results.
\end{remark}

\begin{proof}[Proof of Theorem~\ref{thm:preimage-vr}]
    Let $\mathcal F$ be the simplicial complex of all finite subsets of~$S^1$ of diameter less than~$r$ for some $r > \frac{d}{2d+1}$.   By Lemma~\ref{lem:cstm} we have to show that $\Gamma\colon \mathcal P(S^1, \mathcal F) \to \R^{2d}$ with $\Gamma(\mu) = \int_{S^1} \gamma(x) \ d\mu$ has a zero. This implies that for every $r > \frac{d}{2d+1}$ there is an $A \subset S^1$ of diameter at most~$r$, where every $f\in V$ changes sign. The statement of the theorem is derived by taking the limit $r \to \frac{d}{2d+1}$ and using that $S^1$ is compact. By Theorem~\ref{thm:gillespie} $\mathcal P(S^1,\mathcal F)$ is $2d$-connected since $\mathcal F \simeq S^{2d+1}$ (by Theorem~\ref{thm:vr}) is. Since $\Gamma$ is $\Z/p$-equivariant (as in the proof of Lemma~\ref{lem:cstm}), it has a zero by Theorem~\ref{thm:dold}, which completes the proof.
\end{proof}

\section{Multivariate trigonometric polynomials \\ and equivariant maps from the torus}\label{sec:multivariate}

To establish bounds for the size of geodesic balls, where multivariate trigonometric polynomials must have zeros, we need lower bounds for the equivariant topology of the metric thickening of $(S^1)^n$ by~$\mathcal F$, where $\mathcal F$ consists of all geodesic balls of a fixed radius~$r$. For this it suffices to give a $(\Z/p)^n$-invariant subset~$L$ of~$(S^1)^n$ such that for any proper subset of~$L$ the geodesic balls of radius~$r$ around these points have a common intersection.

\begin{lemma}
\label{lem:torus-grid}
    Let $n\ge1$ be an integer, and let $p\ge2$ be a prime. Let $r=\sqrt{\frac14(n-1)+(\frac{p-2}{2p})^2}$. Then there is a $(\Z/p)^n$-invariant subset $L\subset (S^1)^n$ such that for every $x \in L$ the set $L\setminus \{x\}$ lies in a common geodesic ball of radius~$r$.
\end{lemma}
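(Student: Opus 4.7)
The plan is to take $L$ to be the standard grid
\[
L \;=\; \left\{0,\, \tfrac{1}{p},\, \tfrac{2}{p},\, \dots,\, \tfrac{p-1}{p}\right\}^n \subset (S^1)^n.
\]
Since $L$ is a single orbit under the translation $(\Z/p)^n$-action on the torus, it is tautologically $(\Z/p)^n$-invariant. By this translation symmetry it suffices to exhibit, for the single basepoint $x = 0 \in L$, a closed geodesic ball of radius~$r$ containing $L \setminus \{0\}$; the analogous statement for any other $x \in L$ then follows by translating the center.

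The natural candidate center is $c = (\tfrac12, \tfrac12, \dots, \tfrac12) \in (S^1)^n$, the point ``antipodal'' to~$0$. I will verify that $L \setminus \{0\} \subset \overline{B}(c, r)$ by a direct coordinate-wise estimate. Using the $\ell_2$-metric on $(S^1)^n$, for any $y = (k_1/p, \dots, k_n/p) \in L$,
\[
d(c, y)^2 \;=\; \sum_{i=1}^{n} d_{S^1}\!\left(\tfrac12,\, k_i/p\right)^{\!2}.
\]
Since $k_i/p \in [0,1)$ lies within $\tfrac12$ of $\tfrac12$ without wraparound, each summand equals $\left|\tfrac12 - k_i/p\right|^2$, which is $\tfrac14$ when $k_i = 0$ and at most $\left(\tfrac{p-2}{2p}\right)^{\!2}$ (with equality at $k_i \in \{1, p-1\}$) when $k_i \ne 0$.

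The final step is to maximize this sum over $y \in L \setminus \{0\}$. If $y$ has $m \ge 1$ nonzero coordinates then
\[
d(c, y)^2 \;\le\; (n-m)\cdot \tfrac14 \;+\; m \cdot \left(\tfrac{p-2}{2p}\right)^{\!2}.
\]
Because $\left(\tfrac{p-2}{2p}\right)^{\!2} < \tfrac14$ for every $p \ge 2$, the right-hand side is strictly decreasing in~$m$; its maximum over $m \in \{1, \dots, n\}$ therefore occurs at $m = 1$ and equals $\tfrac{n-1}{4} + \left(\tfrac{p-2}{2p}\right)^{\!2} = r^2$. Hence $L \setminus \{0\} \subset \overline{B}(c, r)$, as required.

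There is no real obstacle here: once the grid $L$ and the antipodal center $c$ are identified, the whole proof is a coordinate-wise estimate. The one inequality doing any work is $\left(\tfrac{p-2}{2p}\right)^{\!2} < \tfrac14$, which ensures that spreading the nonzero entries of $y$ across more coordinates only brings $y$ closer to $c$, so that the farthest surviving points from $c$ are exactly those differing from the removed point in a single slot by the minimal nonzero shift $1/p$ or $(p-1)/p$.
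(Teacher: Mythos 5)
Your proof is correct and follows essentially the same route as the paper: take $L$ to be the $p^n$-point grid (a single $(\Z/p)^n$-orbit), use translation symmetry to reduce to the basepoint $0$, and check that the closed geodesic ball of radius $r$ about the antipodal point $(\tfrac12,\dots,\tfrac12)$ contains $L\setminus\{0\}$. The only difference is that you spell out the coordinate-wise monotonicity-in-$m$ argument showing the farthest surviving points are those differing from $0$ in exactly one coordinate, whereas the paper states this as immediate; the construction and the distance computation yielding $r=\sqrt{\tfrac14(n-1)+(\tfrac{p-2}{2p})^2}$ are identical.
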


\begin{proof}
    Consider the $n$-torus $(S^1)^n$ as the cube $[0,1]^n$ with opposite faces identified without a twist. Let 
    \[
        \widehat L = \left\{\left(\frac{k_1}{p}, \dots, \frac{k_n}{p}\right) \in [0,1]^n \mid k_1, \dots, k_n \in \{0,1,\dots,p-1\}\right\}.
    \]
    The set $\widehat L$ is a grid of $p^n$ points in~$[0,1]^n$, and after taking the quotient yields a set $L$ of size $p^n$ in~$(S^1)^n$. We compute the distance $r$ from the barycenter~$(\frac12, \dots, \frac12)$ to the point~$(0,\dots,0,\frac{1}{p})$. Here it is irrelevant whether we compute this distance in $[0,1]^n$ or in~$(S^1)^n$. This distance is $r = \sqrt{\frac14(n-1)+(\frac{p-2}{2p})^2}$. The geodesic ball of radius $r$ around~$(\frac12, \dots, \frac12)$ contains all points of~$L$ with the exception of~$(0, \dots, 0)$. By symmetry for every $x \in L$, the set $L\setminus \{x\}$ is contained in some closed geodesic ball of radius~$r$.
\end{proof}

We can now show that compositions of linear functionals with a $(\Z/p)^n$-equivariant map $(S^1)^n \to \R^d$ have a common zero on a geodesic ball whose radius may be quantified in terms of $d$ and~$n$.

\begin{theorem}
\label{thm:Zp-torus}
    Let $n \ge 1$ be an integer, let $p\ge 3$ be a prime, let $r = \sqrt{\frac14(n-1)+(\frac{p-2}{2p})^2}$, and let $d = p^n-3$. Let $\gamma \colon (S^1)^n \to \R^d$ be $(\Z/p)^n$-equivariant, where the $(\Z/p)^n$-action on~$\R^d$ has no fixed points other than~$0$. Then there is a closed geodesic ball $B \subset (S^1)^n$ of radius~$r$ such that all compositions of $\gamma$ with a linear functional, that is, maps $f_a \colon (S^1)^n \to \R$ given by $f_a(x) = \langle a, \gamma(x) \rangle$ for $a \in \R^d$, have a zero on~$B$.
\end{theorem}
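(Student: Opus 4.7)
The plan is to apply Theorem~\ref{thm:cstm-eq} with $X = (S^1)^n$, $G = (\Z/p)^n$ acting by translations, ambient target dimension $d = p^n - 3$, and $\mathcal F$ the collection of all closed geodesic balls of radius $r$ in $(S^1)^n$. Each ball is closed and connected, and because $(\Z/p)^n$ acts by isometries the collection $\mathcal F$ is $G$-invariant. As in Section~\ref{sec:context}, the $(\Z/p)^n$-action on~$\R^d$ respects rays, so the hypothesis that $0$ is the only fixed point yields a fixed-point-free action on~$S^{d-1}$. It then remains to exhibit a $G$-equivariant map $Y \to \mathcal P((S^1)^n, \mathcal F)$ for some test space $Y$ that admits no $G$-equivariant map to $S^{d-1}$.

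For $Y$ I would take the boundary of the simplex on $p^n$ vertices, whose vertex set I identify with the $(\Z/p)^n$-invariant set $L \subset (S^1)^n$ produced by Lemma~\ref{lem:torus-grid}. Then $Y \cong S^{p^n-2}$, and the transitive action of $(\Z/p)^n$ on~$L$ induces an action on~$Y$ by permuting vertices. Define
\[
    \Phi\colon Y \to \mathcal P((S^1)^n, \mathcal F), \qquad \sum_{v \in L} \lambda_v v \;\longmapsto\; \sum_{v \in L} \lambda_v \delta_v.
\]
Since any point of $\partial \Delta^{p^n-1}$ has at least one coefficient $\lambda_{v_0}$ equal to zero, the support of $\Phi(\lambda)$ is contained in $L \setminus \{v_0\}$, which by Lemma~\ref{lem:torus-grid} lies in a closed geodesic ball of radius~$r$, so $\Phi$ takes values in $\mathcal P((S^1)^n, \mathcal F)$. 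Continuity in the $1$-Wasserstein metric is immediate, and equivariance follows from $g \cdot \delta_v = \delta_{g\cdot v}$.

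It remains to show no $(\Z/p)^n$-equivariant map $Y \to S^{d-1}$ exists. Here $d-1 = p^n - 4$, and $Y \cong S^{p^n-2}$ is $(p^n-3)$-connected, hence in particular $(p^n-4)$-connected. Since $(\Z/p)^n$ acts fixed-point-freely on $S^{d-1}$, Volovikov's theorem (Theorem~\ref{thm:volov}) forbids such a map. Theorem~\ref{thm:cstm-eq} then produces the desired closed geodesic ball $B \in \mathcal F$ of radius~$r$ on which every $f_a$ has a zero.

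The main obstacle is really the combinatorial construction of the configuration space, i.e.~producing a $G$-space $Y$ of high enough connectivity together with an equivariant map to $\mathcal P((S^1)^n, \mathcal F)$. Lemma~\ref{lem:torus-grid} is engineered precisely for this purpose: the fact that \emph{every} proper subset of $L$ is contained in some radius-$r$ geodesic ball is exactly what allows the entire boundary of the simplex on~$L$, rather than merely a lower-dimensional skeleton, to map into $\mathcal P((S^1)^n, \mathcal F)$; once this is in hand, the remainder is a standard Borsuk--Ulam-type verification using Theorems~\ref{thm:cstm-eq} and~\ref{thm:volov}.
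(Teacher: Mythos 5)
Your proof is correct and follows essentially the same route as the paper's: you use Lemma~\ref{lem:torus-grid} to produce the grid $L$, identify the configuration space $Y$ with the boundary of the simplex on $L$ (a sphere $S^{p^n-2}$), map it equivariantly into $\mathcal P((S^1)^n, \mathcal F)$ by Dirac combinations, and apply Volovikov's theorem together with Theorem~\ref{thm:cstm-eq}. The only cosmetic difference is that you spell out the connectivity count and the ray-respecting reduction to $S^{d-1}$ slightly more explicitly than the paper does.
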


\begin{proof}
    Let $L \subset (S^1)^n$ be the set constructed in Lemma~\ref{lem:torus-grid}. Let $\Sigma$ be the simplicial complex of all proper subsets of~$L$. Thus $\Sigma$ is the boundary of a simplex on $p^n$ vertices and thus a sphere of dimension~$p^n-2$. As $L$ is $(\Z/p)^n$-invariant, $\Sigma$ carries a $(\Z/p)^n$-action inherited from the natural action on its vertices. Let $\mathcal F$ be the set of all closed geodesic balls of radius~$r$ in~$(S^1)^n$. There is a $(\Z/p)^n$-equivariant map $\Phi\colon \Sigma \to \mathcal P((S^1)^n, \mathcal F)$: Every point~$x$ in~$\Sigma$ is a convex combination of all but one of the points in~$L$, say $x = \sum_{y \in L\setminus \{y_0\}} \lambda_yy$. Define $\Phi(x) = \sum_{y \in L\setminus \{y_0\}} \lambda_y\delta_y$. The set $L \setminus \{y_0\}$ is contained in a common closed geodesic ball of radius~$r$, and thus $\Phi$ is well-defined. There is no $(\Z/p)^n$-equivariant map $\Sigma \to S^{d-1}$ by Theorem~\ref{thm:volov}. Now use Lemma~\ref{lem:cstm-eq}.
\end{proof}

\begin{proof}[Proof of Theorem~\ref{thm:multivariate}]
    Let $S \subset (\Z_{\ge 0})^n \setminus \{0\}$ be a set of cardinality at most $\frac12 p^n-1$. Since $p$ is odd, $2|S| \le p^n-3$. For $\alpha = (\alpha_1, \dots, \alpha_n) \in S$ define $g_\alpha\colon (S^1)^n \to \R^2$ by 
    \[
        g_\alpha(t_1, \dots, t_n) = \left(\cos\left(2\pi\sum_i \alpha_it_i\right), \sin\left(2\pi\sum_i \alpha_it_i\right)\right).
    \]
    That is, $g_\alpha$ parametrizes the orbit of the $(S^1)^n$-action on~$\C$ given by $(z_1, \dots, z_n) \cdot z = z_1^{\alpha_1}\dots z_n^{\alpha_n}z$. 
    Since $z_1^{\alpha_1}\dots z_n^{\alpha_n} = 1$ for every choice of $p$th roots of unity $z_1, \dots, z_n$ only if all $\alpha_i$ are divisible by~$p$, the $(\Z/p)^n$-action on $\R^2$ given by $(z_1, \dots, z_n) \cdot z = z_1^{\alpha_1}\dots z_n^{\alpha_n}z$ is fixed-point free on~$\R^2\setminus \{0\}$.

    Let $\gamma\colon (S^1)^n \to \R^{2|S|}$ be given by $\gamma(z) = (g_\alpha(z))_{\alpha \in S}$. The map $\gamma$ is equivariant with respect to the $(\Z/p)^n$-action on $\R^{2|S|}$ given (for each~$\R^2$ separately) above, and the standard action on~$(S^1)^n$ as a subgroup. Theorem~\ref{thm:Zp-torus} guarantees the existence of a ball of radius $r = \sqrt{\frac14(n-1)+(\frac{p-2}{2p})^2}$, where all maps $f_a(x) = \langle a, \gamma(x) \rangle$ for $a \in \R^{2|S|}$, that is, all multivariate trigonometric polynomials with spectrum~$S$, vanish. This is sufficient by Remark~\ref{rem:homogeneous} since $(S^1)^n$ is homogeneous. 
\end{proof}

\begin{remark}
    For $r > 0$ let $\mathcal F_r$ be the set of all closed geodesic balls of radius~$r$ in~$(S^1)^n$. Let $\Sigma$ be the simplicial complex of all finite subsets of~$\mathcal F_r$. This complex is the nerve complex (or \v Cech complex) of closed geodesic balls of radius~$r$ in the $n$-torus. Our proof of Theorem~\ref{thm:multivariate} shows that if $\Sigma$ is $c$-connected then any multivariate trigonometric polynomial with spectrum of size at most $\frac{c+1}{2}$ has a zero on any closed geodesic ball of radius~$r$. Thus topological lower bounds for~$\Sigma$ provide results for zeros of multivariate trigonometric polynomials. In the contrapositive, if some $n$-ary trigonometric polynomial with spectrum~$S$ is positive on some closed geodesic ball of radius~$r$, then the metric thickening $\mathcal P((S^1)^n, \mathcal F_r)$ admits a $(\Z/p)^n$-equivariant map to~$\R^{2|S|} \setminus \{0\}$, and in particular is not $(2|S|-1)$-connected.

    Motivated by the optimality of Theorem~\ref{thm:equivariant}, we conjecture that an $(S^1)^n$-equivariant map $\mathcal P((S^1)^n, \mathcal F_r) \to S^{2d-1}$ for some fixed-point free action on~$S^{2d-1}$ exists if and only if every $n$-ary trigonometric polynomial with spectrum of size~$d$ has a zero on every closed geodesic ball of radius~$r$. This is known for $n=1$.
\end{remark}

\section*{Acknowledgements}

The authors were supported by NSF grant DMS 2042428. We are grateful to Nikola Sadovek and Ran Tao for helpful conversations. We are grateful to an anonymous referee for helpful guidance.


\end{document}